\newcommand{\rrvert}{\vert}
\newcommand{\llvert}{\vert}
\newcommand{\eqref}[1]{(\ref{#1})}
\newtheorem{theorem}{Theorem}[section]
\newtheorem{corollary}[theorem]{Corollary}
\newtheorem{lemma}[theorem]{Lemma}
\newtheorem{proposition}[theorem]{Proposition}
\begin{document}
\begin{frontmatter}

\title{Strong supermartingales and limits of nonnegative~martingales}
\runtitle{Supermartingales and limits of nonnegative
martingales}

\begin{aug}
\author[A]{\fnms{Christoph}~\snm{Czichowsky}\corref{}\ead[label=e1]{c.czichowsky@lse.ac.uk}\thanksref{TT1}}
\and
\author[B]{\fnms{Walter}~\snm{Schachermayer}\ead[label=e2]{walter.schachermayer@univie.ac.at}\thanksref{T1}}
\runauthor{C. Czichowsky and W. Schachermayer}
\affiliation{London School of Economics and Political Science and Universit\"at Wien}
\address[A]{Department of Mathematics\\
London School of Economics \\
\quad and Political Science\\
Columbia House, Houghton Street\\
London WC2A 2AE\\
United Kingdom\\
\printead{e1}}
\address[B]{Fakult\"at f\"ur Mathematik\\
Universit\"at Wien\\
Oskar-Morgenstern-Platz 1\\
A-1090 Wien\\
Austria\\
\printead{e2}}
\end{aug}
\thankstext{TT1}{Supported by the European Research Council (ERC) under Grant FA506041.}
\thankstext{T1}{Supported in part by the Austrian
Science Fund (FWF) under Grant P25815, the European Research Council
(ERC) under Grant FA506041 and by the Vienna Science and Technology
Fund (WWTF) under Grant MA09-003.}

\received{\smonth{12} \syear{2013}}
\revised{\smonth{7} \syear{2014}}

\begin{abstract}
Given a sequence $(M^n)^\infty_{n=1}$ of nonnegative
martingales starting at \mbox{$M^n_0=1$}, we find a sequence of convex
combinations $(\widetilde{M}^n)^\infty_{n=1}$ and a limiting process
$X$ such that $(\widetilde{M}^n_\tau)^\infty_{n=1}$ converges in
probability to $X_\tau$, for all finite stopping times $\tau$. The
limiting process $X$ then is an optional strong supermartingale. A
counterexample reveals that the convergence in probability cannot be
replaced by almost sure convergence in this statement. We also give
similar convergence results for sequences of optional strong
supermartingales $(X^n)^\infty_{n=1}$, their left limits
$(X^n_-)^\infty_{n=1}$ and their stochastic integrals $(\int\varphi \,dX^n)^\infty
_{n=1}$ and explain the relation to the notion of the Fatou limit.
\end{abstract}

\begin{keyword}[class=AMS]
\kwd{60G48}
\kwd{60H05}
\end{keyword}
\begin{keyword}
\kwd{Koml\'{o}s's lemma}
\kwd{limits of nonnegative martingales}
\kwd{Fatou limit}
\kwd{optional strong supermartingales}
\kwd{predictable strong supermartingales}
\kwd{limits of stochastic integrals}
\kwd{convergence in probability at all finite stopping times}
\kwd{substitute for compactness}
\end{keyword}
\end{frontmatter}

\section{Introduction}

Koml\'os's lemma (see \cite{K67,S86} and \cite{DS94}) is a
classical result on the convergence of random variables that can be
used as a substitute for compactness. It has turned out to be very
useful, similarly to the Bolzano--Weierstrass theorem, and has become a
work horse of stochastic analysis in the past decades. In this paper,
we generalize this result to work directly with nonnegative
martingales and convergence in probability simultaneously at all finite
stopping times.

Let us briefly explain this in more detail. Koml\'os's subsequence
theorem states that given a bounded sequence $(f_n)^\infty_{n=1}$ of
random variables in $L^1(P)$, there exists a random variable
$f \in L^1(P)$ and a subsequence $(f_{n_k})^\infty_{k=1}$ such that the Ces\`{a}ro means of any subsubsequence $(f_{n_{k_j}})^\infty_{j=1}$ converge
almost surely to $f$. It quickly follows that there exists a sequence
$(\tilde{f}_n)^\infty_{n=1}$ of convex combinations $\tilde{f}_n\in
\operatorname{conv}(f_n, f_{n+1}, \ldots)$ that converges to $f$
almost surely that
we refer to as Koml\'os's lemma.

Replacing the almost sure convergence by the concept of \textit{Fatou
convergence}, F\"ollmer and Kramkov \cite{FK97} obtained the following
variant of Koml\'os's lemma for stochastic processes. Given a sequence
$(M^n)^\infty_{n=1}$ of nonnegative martingales $M^n=(M^n_t)_{0 \leq t
\leq1}$ starting at $M^n_0=1$, there exists a sequence $(\overline{M}^n)^\infty
_{n=1}$ of convex\vadjust{\goodbreak} combinations $\overline{M}^n \in\operatorname{conv}(M^n, M^{n+1}, \ldots)$
and a nonnegative c\`adl\`ag supermartingale $\overline{X}=(\overline
{X}_t)_{0 \leq t
\leq1}$ starting at $X_0=1$ such that $\overline{M}^n$ is Fatou convergent
along the rationals $\mathbb{Q}\cap[0,1]$ to $\overline{X}$ in the
sense that
\[
\overline{X}_t = \mathop{\overline{\lim}}_{q \in\mathbb{Q}\cap[0,1], q
\downarrow t} \mathop{
\overline{\lim}}_{n\to\infty} \overline{M}^n_q=\mathop{
\underline{\lim}}_{q \in\mathbb{Q}\cap
[0,1], q \downarrow t} \mathop{\underline{\lim}}_{n\to\infty}
\overline{M}^n_q, \qquad\mbox{$P$-a.s.},
\]
for all $t \in[0,1)$ and $\overline{X}_1=\lim_{n\to\infty}
\overline{M}^n_1$.

In this paper, we are interested in a different version of Koml\'os's
lemma for nonnegative martingales in the following sense. Given the
sequence $(M^n)^\infty_{n=1}$ of nonnegative martingales as above and
a finite stopping time $\tau$ defining $f_n:=M^n_\tau$ gives a sequence
of nonnegative random variables that is bounded in $L^1(P)$. By
Koml\'os's lemma there exist convex combinations $\widetilde{M}^n\in\operatorname{conv}(M^n,M^{n+1}, \ldots)$ such that $\widetilde{M}^n_\tau$ converges in
probability to
some random variable $f_\tau$. The question is then, if we can find
\emph{one} sequence $(\widetilde{M}^n)^\infty_{n=1}$ of convex
combinations $\widetilde{M}^n \in\operatorname{conv}(M^n, M^{n+1},
\ldots)$ and a\vspace*{1pt}
stochastic process $X=(X_t)_{0 \leq t \leq1}$ such that we have that
$\widetilde{M}^n_\tau$ converges to $X_\tau$ in probability for
\emph{all} finite stopping times $\tau$.

Our first main result (Theorem~\ref{c1}) shows that this is possible
and that the limiting process $X=(X_t)_{0 \leq t \leq1}$ is an \emph{optional strong supermartingale}. These supermartingales have been
introduced by Mertens~\cite{M72} and are optional processes that
satisfy the supermartingale inequality for all finite stopping times.
This indicates that optional strong supermartingales are the natural
processes for our purpose to work with, and we expand in Theorem~\ref{t1} our convergence result from martingales $(M^n)_{n=1}^\infty$ to
optional strong supermartingales $(X^n)_{n=1}^\infty$.

In dynamic optimization problems our results can be used as substitute
for compactness; compare, for example, \cite{DS99,FK97,KS01,KZ04,S04}. Here the martingales $M^n$ are usually
a minimizing sequence of density processes of equivalent martingale
measures for the dual problem or, as in \cite{DS99} and \cite{FK97},
the wealth processes of self-financing trading strategies.

At a fixed stopping time the convergence in probability can always be
strengthened to almost sure convergence by simply passing to a
subsequence. By means of a counterexample (Proposition~\ref{Ex2}) we
show that this is not possible for all stopping times simultaneously.

Conversely, one can ask what the smallest class of stochastic processes
is that is closed under convergence in probability at all finite
stopping times and contains all bounded martingales. Our second
contribution (Theorem~\ref{t2}) is to show that this is precisely the
class of all optional strong supermartingales provided the underlying
probability space is sufficiently rich to support a Brownian motion.

As the limiting strong supermartingale of a sequence of martingales in
the sense of convergence in probability at all finite stopping times is
no longer a semimartingale, we need to restrict the integrands to be
predictable finite variation processes $\varphi=(\varphi_t)_{0 \leq t
\leq1}$ to come up with a similar convergence result for stochastic
integrals in Proposition~\ref{pSI}. For this, we need to extend our
convergence result to ensure the convergence of the left limit
processes $(X^n_-)_{n=1}^\infty$ in probability at all finite stopping
times to a limiting process $X^{(0)}=(X^{(0)})_{0\leq t\leq1}$ as well
after possibly passing once more to convex combinations. It turns out
that $X^{(0)}$ is a \emph{predictable strong supermartingale} that
does, in general, \emph{not} coincide with the left limit process $X_-$
of the limiting optional strong supermartingale $X$. The notion of a
predictable strong supermartingale has been introduced by Chung and
Glover \cite{CG79} and refers to predictable processes that satisfy the
supermartingale inequality for all \emph{predictable} stopping times.
Using instead of the time interval $I=[0,1]$ its \emph{Alexandroff
double arrow space} $\widetilde{I}=[0,1]\times\{0,1\}$ as index set we
can merge both limiting strong supermartingales into one
supermartingale $X=(X_{\tilde{t}})_{\tilde{t} \in\widetilde{I}}$
indexed by $\widetilde{I}$.

Our motivation for studying these questions comes from portfolio
optimization under transaction costs in mathematical finance in \cite{CS14}. While for the problem without transaction costs the solution to
the dual problem is always attained as a Fatou limit, the dual
optimizer under transaction costs is in general a truly l\`adl\`ag
optional strong supermartingale. So we expect our results naturally to
appear whenever one is optimizing over nonnegative martingales that
are not uniformly integrable or stable under concatenation, and they
might find other applications as well.

The paper is organized as follows. We formulate the problem and state
our main results in Section~\ref{sec2}. The proofs are given in
Sections~\ref{sec3}, \ref{sec5}, \ref{sec6} and \ref{sec7}. Section~\ref{sec4} provides the counterexample that our convergence results
cannot be strengthened to almost sure convergence.

\section{Formulation of the problem and main results}\label{sec2}
Let $(\Omega, \mathcal{F}, P)$ be a~probability space and
$L^0(P)=L^0(\Omega,
\mathcal{F}, P)$ the space of all real-valued random variables. As
usual we equip $L^0(P)$ with the topology of convergence in probability
and denote by $L^0_+(P)=L^0(\Omega, \mathcal{F}, P;\mathbb{R}_+)$ its
positive cone. We call a subset $A$ of $L^0(P)$ bounded in probability
or simply bounded in $L^0(P)$, if $\lim_{m\to\infty} \sup_{f
\in
A} P (|f|>m)=0$.

Koml\'os's subsequence theorem (see \cite{K67} and \cite{S86}) states
the following.

\begin{theorem}\label{kssthm}
Let $(f_n)^\infty_{n=1}$ be a bounded sequence of random variables in
$L^1(\Omega, \mathcal{F}, P)$. Then there exists a subsequence
$(f_{n_k})^\infty_{k=1}$ and a random variable $f$ such that the Ces\`{a}ro means $\frac{1}{J}\sum_{j=1}^J f_{n_{k_j}}$ of any subsubsequence
$(f_{n_{k_j}})^\infty_{j=1}$ converge $P$-almost surely to $f$, as
$J\to
\infty$.
\end{theorem}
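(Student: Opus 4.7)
The plan is to follow the classical route of Komlós via an $L^2$ reduction combined with a truncation-and-diagonal argument. First I would reduce to the non-negative case by splitting $f_n=f_n^+-f_n^-$ and noting that both $(f_n^+)$ and $(f_n^-)$ are $L^1$-bounded, so it suffices to produce a common subsequence that works for each. Next, for each truncation level $M\in\N$ set $f_n^M:=f_n\wedge M$; this gives a sequence bounded in $L^\infty$, hence in $L^2(P)$.

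The heart of the argument is the following $L^2$-lemma: if $(h_n)$ is bounded in $L^2(P)$ and converges weakly in $L^2$ to some $h$, then there is a subsequence $(h_{n_k})$ such that the Cesàro means $\frac{1}{J}\sum_{j=1}^J h_{n_{k_j}}$ of \emph{every} further subsequence converge to $h$ $P$-a.s. This is done by choosing $n_k$ inductively so that $\lan h_{n_{k+1}}-h,\,h_{n_j}-h\ra \le 2^{-k}$ for $j\le k$, so that a variance computation gives
\[
\E\left[\Bigl(\tfrac{1}{J}\sum_{j=1}^J(h_{n_{k_j}}-h)\Bigr)^2\right]\le \frac{C}{J},
\]
and a subsequence-Borel--Cantelli argument along $J=2^r$ upgrades this to almost sure convergence of the Cesàro means of any subsequence. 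I would apply this lemma to $(f_n^M)$ after first extracting a weak $L^2$-limit $g^M$ for each fixed $M$.

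Having fixed $M$, one performs a diagonal extraction in $M$: obtain $(n_k^{(1)})$ working for $M=1$, then $(n_k^{(2)})\subset (n_k^{(1)})$ for $M=2$, etc., and take the diagonal $n_k:=n_k^{(k)}$. Along this subsequence, the Cesàro means of every further subsequence of $(f_{n_k}^M)$ converge a.s.\ to $g^M$ for each $M$. Since $g^M$ is increasing in $M$ with $\sup_M \E[g^M]\le \sup_n\|f_n\|_{L^1}<\infty$ by Fatou, it admits an a.s. finite limit $f:=\lim_M g^M$.

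The main obstacle, and the step that requires care, is controlling the untruncated tails $f_n-f_n^M$ uniformly over \emph{all} subsequences. Using the $L^1$-bound $K:=\sup_n\|f_n\|_{L^1}$, Markov's inequality yields $P(f_n\ne f_n^M)\le K/M$, and $\E[|f_n-f_n^M|\mathbf 1_{f_n\le M'}]$ can be estimated via the difference $\|f_n^{M'}-f_n^M\|_{L^1}$, which in the limit is $\E[g^{M'}-g^M]\to 0$ as $M,M'\to\infty$. Combining these estimates along the diagonal subsequence with another diagonalisation, one shows that for every further subsequence $(n_{k_j})$ the Cesàro means $\frac{1}{J}\sum_{j=1}^J f_{n_{k_j}}$ converge a.s.\ to $f$. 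The delicate point is that the choice of ``further subsequence'' is not known in advance, so each inductive extraction step must be made robust to arbitrary downstream extractions, which is exactly what the $L^2$-lemma guarantees.
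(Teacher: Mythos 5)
The paper does not actually prove this statement; Theorem~\ref{kssthm} is Koml\'os' classical subsequence theorem and is cited from \cite{K67} and \cite{S86}. There is therefore no ``paper's own proof'' to compare against, and I assess your proposal on its own merits.

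Your steps 1--4 (reduction to $f_n\ge 0$, truncation $f_n^M$, near-orthogonalization $\lan h_{n_{k+1}}-h,\,h_{n_i}-h\ra\le 2^{-k}$ for $i\le k$, variance bound $\E[S_J^2]\le C/J$ for every further subsequence, diagonal extraction in $M$) follow the standard Chatterji-style $L^2$ route and are sound. One small caveat in the $L^2$-lemma: passing from a.s.~convergence along $J=2^r$ to all $J$ needs more than a ``subsequence-Borel--Cantelli argument'': the summands $g_j=h_{n_{k_j}}-h$ are not martingale differences, so a maximal estimate (e.g.~Rademacher--Menshov, paying a $\log^2 J$ factor which the $1/J$ rate absorbs) or an explicit block estimate is required, together with a two-sided inner-product bound $|\lan g_i,g_j\ra|\le 2^{-(j-1)}$ rather than your one-sided one. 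This is fixable but should be spelled out.

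The genuine gap is Step 5, the tail control, which is the heart of Koml\'os' theorem and which your outline does not actually close. The estimates you invoke --- $P(f_n\ne f_n^M)\le K/M$ and $\E[f_n^{M'}-f_n^M]\to\E[g^{M'}-g^M]\to 0$ --- control the tails in probability and in $L^1$, but they cannot by themselves yield $\limsup_J\frac{1}{J}\sum_{j\le J}(f_{n_{k_j}}-f_{n_{k_j}}^M)\to 0$ a.s.\ as $M\to\infty$. The obstruction is precisely the failure of uniform integrability: after your diagonal extraction one typically has $\E[f_{n_j}]\to L$ and $\E[f_{n_j}^M]\to u_M\uparrow u_\infty$, and the escaping mass $L-u_\infty$ can be strictly positive, so $\E[\frac{1}{J}\sum_j(f_{n_{k_j}}-f_{n_{k_j}}^M)]$ does \emph{not} tend to $0$ as $M\to\infty$. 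Markov/$L^1$ estimates therefore dead-end here (they give a bound $\ge(L-u_M)/\varepsilon$ that never vanishes). Almost sure Cesàro convergence is nonetheless true, but only because a sufficiently thin subsequence pushes the excess onto sets that a.s.~occur only finitely often: one must choose, during the extraction, per-index truncation levels $c_j$ (e.g.~so that $P(f_{n_j}>c_j)\le 2^{-j}$, which propagates to $\sum_j P(f_{n_{k_j}}>c_{k_j})\le\sum_j 2^{-k_j}<\infty$ along every further subsequence), and then run a second-moment/Kronecker-type estimate for the Cesàro sums of the variably truncated terms $f_{n_{k_j}}^{c_{k_j}}$. The phrase ``combining these estimates along the diagonal subsequence with another diagonalisation'' does not substitute for this argument, which is both the most delicate and the most novel part of Koml\'os' theorem. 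As written, your proposal establishes the easy inequality $\liminf_J\frac{1}{J}\sum_j f_{n_{k_j}}\ge f$ a.s.\ but not the matching $\limsup$.
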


In applications this result is often used in the following variant that
we also refer to as Koml\'os's lemma; compare Lemma~A.1 in \cite{DS94}.

\begin{corollary}\label{kl}
Let $(f_n)^\infty_{n=1}$ be a sequence of nonnegative random variables
that is bounded in $L^1(P)$. Then there exists a sequence $(\tilde
{f}_n)^\infty_{n=1}$ of convex combinations
\[
\tilde{f}_n \in\operatorname{conv}(f_n,
f_{n+1}, \ldots)
\]
and a nonnegative\vadjust{\goodbreak} random variable $f \in L^1 (P)$ such that
$\tilde{f}_n \stackrel{P\mbox{-}\mathit{a.s.}}{\longrightarrow} f$.
\end{corollary}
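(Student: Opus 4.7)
The plan is to derive Corollary \ref{kl} directly from Theorem \ref{kssthm} by a careful choice of convex combinations that are \emph{forward}, i.e.\ that lie in the tail cones $\conv(f_n, f_{n+1}, \dots)$.

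First I would apply Theorem \ref{kssthm} to $(f_n)^\infty_{n=1}$, which is bounded in $L^1(P)$ because $f_n \geq 0$ and $\sup_n E[f_n] < \infty$. This produces a subsequence $(f_{n_k})^\infty_{k=1}$ and a (necessarily non-negative) random variable $f$ such that the Cesaro means of any further subsequence converge $P$-almost surely to $f$; in particular, the full Cesaro means $\bar{S}_N := \frac{1}{N}\sum_{k=1}^N f_{n_k}$ converge to $f$ almost surely. Passing to a further subsequence if necessary, I may assume $n_k \geq k$ for all $k \in \N$. The integrability $f \in L^1(P)$ then follows from Fatou's lemma applied to $\bar S_N$, using the uniform $L^1$-bound.

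The key construction is a block-averaging trick: for each $n \in \N$ set
\[
\tilde{f}_n := \frac{1}{n}\sum_{k=n}^{2n-1} f_{n_k}.
\]
Since $n_k \geq k \geq n$ for every $k \in \{n,\dots,2n-1\}$, each $\tilde{f}_n$ is indeed an element of $\conv(f_n, f_{n+1}, \dots)$. To verify $\tilde{f}_n \to f$ almost surely, I would rewrite the block average as a telescoping combination of full Cesaro means,
\[
\tilde{f}_n \;=\; \frac{(2n-1)\,\bar{S}_{2n-1} \;-\; (n-1)\,\bar{S}_{n-1}}{n},
\]
so that on the almost sure convergence set of $\bar{S}_N \to f$ one immediately obtains $\tilde{f}_n \to 2f - f = f$.

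The only mildly delicate point is ensuring that the $\tilde{f}_n$ be convex combinations of $f_n, f_{n+1}, \dots$ (and not of terms with smaller indices): this is precisely handled by the shift $n_k \geq k$ together with averaging over the block from $n$ to $2n-1$ instead of from $1$ to $n$. Everything else is immediate from Theorem \ref{kssthm}.
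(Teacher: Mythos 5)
Your proof is correct. The block-averaging trick $\tilde f_n := \frac{1}{n}\sum_{k=n}^{2n-1} f_{n_k}$ is a clean way to manufacture \emph{forward} convex combinations from the Cesàro convergence delivered by Theorem \ref{kssthm}: the telescoping identity $\tilde f_n = \frac{(2n-1)\bar S_{2n-1} - (n-1)\bar S_{n-1}}{n}$ does indeed give $\tilde f_n \to 2f - f = f$ almost surely, and $n_k \geq k$ guarantees $\tilde f_n \in \conv(f_n, f_{n+1}, \dots)$. (Note that $n_k \geq k$ is automatic for any strictly increasing sequence in $\N$, so the "pass to a further subsequence" step is harmless but unnecessary.) Integrability and non-negativity of $f$ follow by Fatou and pointwise limits of non-negative functions, as you say.

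The paper does not give its own proof of Corollary \ref{kl}; it refers to Lemma A.1 of Delbaen--Schachermayer (1994), which is proved by a self-contained truncation/weak-compactness argument that does not invoke Komlós' subsequence theorem at all. Your route instead derives the corollary directly from Theorem \ref{kssthm}, matching the paper's informal remark in the introduction that it "quickly follows". Your approach is shorter given that Theorem \ref{kssthm} is already in hand, while the Delbaen--Schachermayer argument is attractive when one wants to avoid relying on the full strength of Komlós' theorem. Both are legitimate; yours is a genuine, complete derivation.
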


As has been illustrated by the work of Kramkov and Schachermayer \cite{KS01} and \v Zitkovi\'c \cite{Z10} (see also \cite{S04}) Koml\'os's
lemma can be used as a substitute for compactness, for example, in the
derivation of minimax theorems for Lagrange functions, where the
optimization is typically over convex sets.
Replacing the $P$-almost sure convergence by the concept of \emph{Fatou
convergence} F\"ollmer and Kramkov \cite{FK97} used Koml\'os's lemma to
come up with a similar convergence result for stochastic processes.
For this, we equip the probability space $(\Omega, \mathcal{F}, P)$
with a
filtration $\mathbb{F}=(\mathcal{F}_t)_{0 \leq t \leq1}$ satisfying\vspace*{1pt}
the usual
conditions of right continuity and completeness and let $(M^n)^\infty
_{n=1}$ be a sequence of nonnegative martingales $M^n=(M^n_t)_{0 \leq
t \leq1}$ starting at $M^n_0=1$. For all unexplained notation from the
general theory of stochastic processes and stochastic integration, we
refer to the book of Dellacherie and Meyer~\cite{DM82}.

The construction of the Fatou limit by F\"ollmer and Kramkov can be
summarized as in the following proposition.

\begin{proposition}[(Lemma~5.2 of \cite{FK97})]\label{pFatou}
Let\vspace*{1pt} $(M^n)^\infty_{n=1}$ be a sequence of nonnegative martingales
$M^n=(M^n_t)_{0 \leq t \leq1}$ starting at $M^n_0=1$. Then there
exists a~sequence $(\overline{M}^n)^\infty_{n=1}$ of convex combinations
\[
\overline{M}^n \in\operatorname{conv}\bigl(M^n,
M^{n+1}, \ldots\bigr)
\]
and nonnegative random variables $Z_q$ for $q \in\mathbb{Q} \cap
[0,1]$ such that:
\begin{longlist}[(3)]
\item[(1)] $\overline{M}^n_q \stackrel{P\mbox{-}\mathit{a.s.}}{\longrightarrow} Z_q$ for
all $q \in\mathbb{Q}\cap[0,1]$;
\item[(2)] the process $\overline{X}=(\overline{X}_t)_{0 \leq
t \leq1}$ given by
%
\begin{equation}\label{defFatou}
\overline{X}_t:=\lim_{q \in\mathbb{Q} \cap[0,1],  q
\downarrow t}
Z_q\quad\mbox{and}\quad  \overline{X}_1=Z_1
\end{equation}
is a c\`adl\`ag supermartingale;
\item[(3)] the process $\overline{X}=(\overline{X}_t)_{0 \leq
t \leq1}$ is the \emph{Fatou limit} of the sequence $(\overline{M}^n)^\infty_{n=1}$ along
$\mathbb{Q}
\cap[0,1]$, that is,
\begin{eqnarray*}
\overline{X}_t & = &\mathop{\overline{\lim}}_{q \in\mathbb{Q} \cap[0,1],  q
\downarrow t} \mathop{
\overline{\lim}}_{n \to\infty} \overline{M}^n_q =
\mathop{\underline{\lim}}_{q \in
\mathbb{Q} \cap
[0,1],  q \downarrow t}\mathop{\underline{\lim}}_{n \to\infty}
\overline{M}^n_q, \qquad \mbox{$P$-a.s.},\quad \mbox{and}\\
\overline{X}_1 &=& \lim_{n\to
\infty}\overline{M}^n_1.
\end{eqnarray*}
\end{longlist}
\end{proposition}

Here\vspace*{1pt} it is important to note that $\lim_{q\in\mathbb{Q} \cap[0,1],
q\downarrow t}$ denotes the limit to $t$ through all $q \in\mathbb{Q}
\cap[0,1]$ that are \emph{strictly} bigger than $t$. Therefore we do
not have in general that $\overline{X}_t=\lim_{n\to\infty}\overline
{M}^n_t$ for $t\in
[0,1)$, not even for $t\in\mathbb{Q} \cap[0,1]$, as is illustrated in
the simple example below.

\begin{example}\label{Ex0}
Let $(Y_n)^\infty_{n=1}$ be a sequence of random variables taking
values in $\{0,n\}$ such that $P[Y_n=n]=\frac{1}{n}$ and define a
sequence $(M^n)^\infty_{n=1}$ of martingales $M^n=(M^n_t)_{0 \leq t
\leq1}$ by
\[
M^n_t= 1 + \bigl(Y^n-1\bigr)
\mathbh{1}_{\rrbracket {1}/{2}(1+{1}/{n}),
1\rrbracket}(t).
\]
Then $M^n_t$ converges to $\mathbh{1}_{\llbracket0,{1}/{2}\rrbracket} (t)$ for each $t \in[0,1]$. However, the c\`adl\`ag
Fatou limit is $\overline{X}=\mathbh{1}_{\llbracket0,{1}/{2}\llbracket}(t)$.
\end{example}

The convergence, of course, also fails at stopping times in general.
This motivates us to ask for a different extension of Koml\'os's lemma
to nonnegative martingales in the following sense. Let $(M^n)^\infty
_{n=1}$ be again a sequence of nonnegative martingales $M^n=(M^n_t)_{0
\leq t \leq1}$ starting at $M^n_0=1$ and $\tau$ a finite stopping
time. Then defining $f_n:=M^n_\tau$ gives a sequence $(f_n)^\infty
_{n=1}$ of nonnegative random variables that are bounded in $L^1(P)$.
By Koml\'os's lemma there exist convex combinations $\widetilde{M}^n
\in\operatorname{conv}(M^n, M^{n+1}, \dots)$ and a nonnegative
random variable
$f_\tau$ such that
\[
\widetilde{M}^n_\tau=:\tilde{f}_n \mathop{\longrightarrow}^{P\mbox{-}\mathrm{a.s.}} f_\tau.
\]
The questions are then:
\begin{longlist}[(3)]
\item[(1)] Can we find \emph{one} sequence $(\widetilde{M}^n)^\infty_{n=1}$ of convex combinations
\[
\widetilde{M}^n \in\operatorname{conv}\bigl(M^n,
M^{n+1}, \ldots\bigr)
\]
such that, for \emph{all} finite stopping times $\tau$, we have
%
\begin{equation}
\label{q1}
\widetilde{M}^n_\tau\mathop{\longrightarrow}^{P\mbox{-}\mathrm{a.s.}} f_\tau
\end{equation}
for some random variables $f_\tau$ that may depend on the stopping
times $\tau$?
\item[(2)] If (1) is possible, can we find a stochastic
process $X=(X_t)_{0 \leq t \leq1}$ such that $X_\tau=f_\tau$ for all
finite stopping times $\tau$?
\item[(3)] If such a process $X=(X_t)_{0 \leq t \leq1}$ as in
(2) exists, what kind of process is it?
\end{longlist}

Let us start with the last question. If such a process $X=(X_t)_{0 \leq
t \leq1}$ exists, it follows from Fatou's lemma that it is (up to
optional measurability) an optional strong supermartingale.

\begin{definition}
A real-valued stochastic process $X=(X_t)_{0\leq t\leq1}$ is called an
\emph{optional strong supermartingale}, if:
\begin{longlist}[(3)]
\item[(1)] $X$ is optional;
\item[(2)] $X_\tau$ is integrable for every $[0,1]$-valued
stopping time $\tau$;
\item[(3)] for all stopping times $\sigma$ and $\tau$ with
$0\leq\sigma\leq\tau\leq1$, we have
\[
X_\sigma\geq E[X_\tau|\mathcal{F}_\sigma].
\]
\end{longlist}
\end{definition}

These processes have been introduced by Mertens \cite{M72} as a
generalization of the notion of a c\`adl\`ag (right continous with left
limits) supermartingale that one is usually working with. Indeed, by
the optional sampling theorem each c\`adl\`ag supermartingale is an
optional strong supermartingale, but not every optional strong
supermartingale has a c\`adl\`ag modification. For example, every
\textit{deterministic} decreasing function $(X_t)_{0 \leq t \leq1}$ is an
optional strong supermartingale, but there is little reason why it
should be c\`adl\`ag. However, by Theorem~4 in Appendix I in \cite
{DM82}, every optional strong supermartingale is indistinguishable from
a l\`adl\`ag (left and right limits) process, and so we can assume
without loss of generality that all optional strong supermartingales we
consider in this paper are l\`adl\`ag. Similarly to the Doob--Meyer
decomposition in the c\`adl\`ag case, every optional strong
supermartingale $X$ has a unique decomposition
%
\begin{equation}\label{eqMD}
X=M-A
\end{equation}
into a local martingale $M$ and a nondecreasing predictable
process $A$ starting at~$0$. This decomposition is due to Mertens \cite
{M72} (compare also Theorem~20 in Appendix~I in \cite{DM82}) and is
therefore called the \emph{Mertens decomposition}. Note that, under the
usual conditions of completeness and right continuity of the
filtration, we can and do choose a c\`adl\`ag modification of the local
martingale $M$ in \eqref{eqMD}. On the other hand, the nondecreasing
process $A$ is in particular l\`adl\`ag.

For l\`adl\`ag processes $X=(X_t)_{0 \leq t \leq1}$ we denote by
$X_{t+}:= \lim_{h \searrow0} X_{t+h}$ and $X_{t-}:=\lim_{h \searrow0}
X_{t-h}$ the right and left limits and by $\Delta_+X_t:=X_{t+} - X_t$
and $\Delta X_t:=X_t - X_{t-}$ the right and left jumps. We also use
the convention that $X_{0-}=0$ and $X_{1+} = X_1$.

After these preparations we have now everything in place to formulate
our main results. The proofs will be given in the Sections~\ref{sec3},
\ref{sec5}, \ref{sec6} and \ref{sec7}.

\begin{theorem}\label{c1}
Let $(M^n)^\infty_{n=1}$ be\vspace*{1pt} a sequence of nonnegative c\`adl\`ag
martingales $M^n=(M^n_t)_{0\leq t \leq1}$ starting at $M^n_0=1$. Then
there is a sequence $(\widetilde{M}^n)_{n=1}^\infty$ of convex combinations
\[
\widetilde{M}^n \in\operatorname{conv}\bigl(M^n,M^{n+1},
\ldots\bigr)
\]
and a nonnegative optional strong supermartingale $X=(X_t)_{0\leq
t\leq1}$ such that, for every $[0,1]$-valued stopping time $\tau$, we
have that
%
\begin{equation}
\label{M2} \widetilde{M}^n_\tau\stackrel{P}{\longrightarrow} X_\tau.
\end{equation}
\end{theorem}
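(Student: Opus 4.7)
The proof naturally splits into two tasks: first, producing a single sequence of convex combinations $\widetilde{M}^n$ such that $\widetilde{M}^n_\tau$ converges in probability for \emph{every} finite stopping time $\tau$; second, identifying the limit values as an optional strong supermartingale. The natural starting point is Proposition \ref{p:Fatou}: after a first passage to convex combinations (still called $M^n$) there is a c\`adl\`ag supermartingale $\bX$ that is the Fatou limit of $(M^n)$ along $\mathbb{Q}\cap[0,1]$. Example \ref{Ex0} shows this is not yet enough, because mass can concentrate at stopping times that foretell a right-jump of $\bar X$; the job is to add these ``missing'' pieces back.

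The core device is an \emph{exhaustion argument}. Given the current sequence, consider the class $\mathcal{S}$ of all finite stopping times $\tau$ for which some further sequence of convex combinations makes $\widetilde{M}^n_\tau$ converge in probability to a random variable $Y_\tau$ that is strictly larger than $\bX_\tau$ on a set of positive probability. For each $\tau\in\mathcal{S}$ pick such a $Y_\tau$ and let $\alpha(\tau):=E[Y_\tau-\bX_\tau]\in(0,\infty)$. The martingale property of $M^n$ together with Fatou's lemma forces the global bound $E[Y_\tau]\le 1$, and combined with $\bX$ being a fixed supermartingale this gives a uniform control
\[
\alpha(\tau)\le 1-E[\bX_\tau].
\]
Exhausting $\mathcal{S}$ by choosing, at each step, a stopping time that captures, up to a factor $\tfrac12$, the supremum of $\alpha$ on the stopping times still unaccounted for, and then updating the sequence of convex combinations by a Koml\'os extraction at that stopping time, yields at most countably many ``essential'' exceptional stopping times $(\tau_k)_{k\ge1}$. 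A standard diagonal construction across these $\tau_k$'s plus the rationals $\mathbb{Q}\cap[0,1]$ produces one sequence $(\widetilde{M}^n)$ along which the limit in probability exists at each rational and at each $\tau_k$, and for which no further exceptional stopping time with strictly larger limit survives.

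With such a sequence in hand, I would define, for each finite stopping time $\tau$, the random variable $X_\tau$ as the $L^0$-limit of $\widetilde{M}^n_\tau$, first on a countable dense family and then at arbitrary $\tau$ by showing that the limit exists and equals either $\bX_\tau$ or the enhanced value produced by the exhaustion. To produce an honest stochastic process $X=(X_t)_{0\le t\le 1}$ whose values at stopping times are exactly these $X_\tau$, I would appeal to the optional section theorem, patching $\bX$ on $[0,1]\setminus\bigcup_k\llbracket\tau_k\rrbracket$ with the enhanced values on the graphs $\llbracket\tau_k\rrbracket$; the resulting process is optional by construction. The supermartingale inequality $X_\sigma\ge E[X_\tau\mid\cF_\sigma]$ for $\sigma\le\tau$ follows directly from Fatou's lemma applied to $E[\widetilde{M}^n_\tau\mid\cF_\sigma]=\widetilde{M}^n_\sigma$.

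The main obstacle, as I see it, is the exhaustion step: one has to check that $\mathcal{S}$ can indeed be exhausted by countably many stopping times and that after this is done no \emph{new} essential exceptions arise along the diagonal sequence. This relies on the a priori $L^1$-budget coming from $M^n_0=1$ and the fact that $\bX$ is already a c\`adl\`ag supermartingale, which together make each extraction monotone: the expected values $E[\widetilde{M}^n_{\tau_k}]$ can only increase toward their upper bound, so at most countably many strictly positive increments $\alpha(\tau_k)$ are possible. Everything else -- measurability, integrability, and the supermartingale inequality -- is routine once the convergence in probability at all stopping times is established.
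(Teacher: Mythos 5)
Your proposal follows the same overall architecture as the paper (Fatou limit $\bX$ from Proposition \ref{p:Fatou}, an exhaustion by countably many ``exceptional'' stopping times, then patch $\bX$ on their graphs), but there is a genuine gap in the budget estimate that drives the exhaustion, and a further step you dismiss as routine is actually delicate.

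\textbf{The budget bound is not established.} To conclude that at most countably many exceptional stopping times can carry excess mass, you need a summability statement of the form $\sum_k \alpha(\tau_k)\le 1$ for $(\tau_k)$ with pairwise disjoint graphs, where $\alpha(\tau_k)=E[Y_{\tau_k}-\bX_{\tau_k}]$. The bound you actually record, $\alpha(\tau)\le 1-E[\bX_\tau]$ for each individual $\tau$, is far weaker: with disjoint $\tau_1,\tau_2,\dots$ nothing prevents each $\alpha(\tau_k)$ from being, say, $0.9$. The paper obtains the true budget bound by first proving (Lemma \ref{lE6}) that the patched process $\tX$ is an optional strong supermartingale, taking its Mertens decomposition $\tX=\tM-\tA$, and identifying the excess at each $\tau_m$ with the right jump $\Delta_+\tA_{\tau_m}$ of the non-decreasing predictable part; since $\tA_0=0$, $\tA$ is non-decreasing, and $E[\tA_1]\le 1$ (because $\tM$ is a non-negative local martingale with $\tM_0=1$), one gets $E[\sum_m(\tX_{\tau_m}-\bX_{\tau_m})]\le 1$. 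Without some substitute for this Mertens-decomposition argument, your greedy exhaustion has no a priori reason to terminate.

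\textbf{The supermartingale property is not ``direct from Fatou.''} The limiting process $X$ is defined piecewise, agreeing with the Fatou limit $\bX$ off the union of graphs $\bigcup_k\llbracket\tau_k\rrbracket$ and with the Koml\'os limits $Y_{\tau_k}$ on them. Verifying $X_\sigma\ge E[X_\tau\mid\F_\sigma]$ for arbitrary stopping times requires comparing these two regimes via dyadic approximations of $\sigma$ and $\tau$, repeatedly alternating Fatou's lemma with the martingale property of $\widetilde M^n$, and invoking backward supermartingale convergence of $(Z_{\vr_{1,k}})_k$; this is the long chain of inequalities in the proof of Lemma \ref{lE6}, not a one-line application of Fatou. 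Moreover, the convergence you have is only in probability, so to apply Fatou you must pass to almost surely convergent subsequences at each step and check that the extracted subsequence does not depend on the stopping time under consideration. The paper's one-sided a.s.\ convergence $(\bM^n_\tau-\bX_\tau)^-\to 0$ (Lemma \ref{A1}) is exactly the device that makes this manageable, and it is absent from your proposal.

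The remaining steps of your proposal (Koml\'os diagonalization over a countable family, measurability of $X$ via the optional section theorem, and the final contradiction if some additional stopping time still carries excess mass) align with the paper's proof of Theorem \ref{c1} and are fine in outline.
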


Combining the above with a similar convergence result for predictable
finite variation processes by Campi and Schachermayer \cite{CS06}
allows us to extend our convergence result to optional strong
supermartingales by using the Mertens decomposition. Theorem~\ref{c1}
is thus only a special case of the following result.

\begin{theorem}\label{t1}
Let $(X^n)^\infty_{n=1}$ be a sequence of nonnegative optional strong
supermartingales $X^n=(X_t)_{0 \leq t \leq1}$ starting at $X^n_0=1$.\vspace*{1pt}
Then there is a sequence $(\widetilde{X}^n)_{n=1}^\infty$ of convex
combinations
\[
\widetilde{X}^n \in\operatorname{conv}\bigl(X^n,X^{n+1},
\ldots\bigr)
\]
and a nonnegative optional strong supermartingale $X=(X_t)_{0 \leq t
\leq1}$ such that, for every $[0,1]$-valued stopping time $\tau$, we
have convergence in probability, that is,
%
\begin{equation}
\label{C2} \widetilde{X}^n_\tau\stackrel{P} {
\longrightarrow} X_\tau.
\end{equation}
\end{theorem}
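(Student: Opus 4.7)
The plan is to reduce Theorem \ref{t1} to Theorem \ref{c1} by means of the Mertens decomposition. For each $n$, write $X^n = M^n - A^n$, where $M^n$ is a c\`adl\`ag local martingale with $M^n_0 = 1$ and $A^n$ is a non-decreasing predictable process with $A^n_0 = 0$. Since $X^n \geq 0$ and $A^n \geq 0$, the process $M^n = X^n + A^n$ is non-negative, hence a c\`adl\`ag supermartingale; moreover, the supermartingale inequality gives $E[A^n_1] = E[M^n_1] - E[X^n_1] \leq M^n_0 = 1$, so the terminal values $(A^n_1)_{n=1}^\infty$ are bounded in $L^1(P)$.

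First, I would apply Theorem \ref{c1} — more precisely, its natural extension to non-negative c\`adl\`ag supermartingales, which follows by the same proof (the key point is that non-negative supermartingales, rather than just martingales, are what one truly uses in the Fatou-type argument underlying Theorem~\ref{c1}) — to obtain convex combinations $\widehat{M}^n \in \conv(M^n, M^{n+1}, \ldots)$ converging in probability at every $[0,1]$-valued stopping time $\tau$ to a non-negative optional strong supermartingale $Y$. Applying the same convex-combination coefficients to the $A^n$'s yields $\widehat{A}^n \in \conv(A^n, A^{n+1}, \ldots)$, which are still predictable, non-decreasing and satisfy $E[\widehat{A}^n_1] \leq 1$.

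Next, I would invoke the Campi--Schachermayer \cite{CS06} convergence theorem for predictable non-decreasing processes to extract further convex combinations $\widetilde{A}^n \in \conv(\widehat{A}^n, \widehat{A}^{n+1}, \ldots)$ converging in probability at every stopping time to a predictable non-decreasing process $A$. Forming $\widetilde{M}^n$ from the $\widehat{M}^n$'s with the same coefficients preserves, by convexity, the convergence $\widetilde{M}^n_\tau \to Y_\tau$ at every stopping time. Setting $\widetilde{X}^n := \widetilde{M}^n - \widetilde{A}^n \in \conv(X^n, X^{n+1}, \ldots)$ and $X := Y - A$, one obtains $\widetilde{X}^n_\tau \xrightarrow{P} X_\tau$ at every $[0,1]$-valued stopping time $\tau$; the limit $X$ is the difference of an optional strong supermartingale and a predictable non-decreasing process, hence is itself an optional strong supermartingale, and it is non-negative as the limit in probability of the non-negative $\widetilde{X}^n$.

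The main obstacle I anticipate lies in extracting convergent convex combinations simultaneously for the local-martingale and finite-variation parts of the Mertens decomposition, without destroying either form of convergence. This is handled by the standard diagonal device of taking convex combinations of previously convergent convex combinations, but one must verify that the Campi--Schachermayer result genuinely applies under the sole bound $E[\widehat{A}^n_1] \leq 1$, and that neither the predictability of $A$ nor the strong supermartingale property of $Y$ (and hence of $X$) is lost in the passage to the limit in probability at all stopping times.
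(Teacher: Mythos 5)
Your outline matches the paper's proof in its broad strokes: Mertens decomposition $X^n = M^n - A^n$, handle the (local) martingale part via Theorem~\ref{c1}, handle the finite-variation part via Campi--Schachermayer, recombine with matching convex-combination weights, and show the limit $X = Y - A$ is an optional strong supermartingale. The $L^1$ bound $E[A^n_1]\le 1$ (hence $L^0$-boundedness, which is what Proposition~3.4 of \cite{CS06} actually requires) and the final verification that $X$ inherits the strong supermartingale property and non-negativity are all fine.

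The one place you deviate from the paper, and where your argument has a genuine gap, is the treatment of the local-martingale part. You assert that Theorem~\ref{c1} ``extends to non-negative c\`adl\`ag supermartingales by the same proof,'' with the parenthetical claim that the Fatou-type argument ``truly uses'' only the supermartingale property. That claim is not correct as stated: the proof of Lemma~\ref{lE6} invokes the \emph{martingale} equality $E[\widetilde M^n_{\varrho_{2,k}}\mid\cF_{\varrho_2}] = \widetilde M^n_{\varrho_2}$ (and similarly at $\varrho_{1,k}$) at several lines of the chain of estimates. It happens that replacing these equalities by the supermartingale inequalities does not break the direction of the final estimate, but this needs to be checked line by line, and you have not done so; it is not a ``same proof'' triviality. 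The paper sidesteps this entirely by observing that $M^n$ is a \emph{local} martingale and reducing to the already-proved martingale case by localisation: choose $m(n)$ so that $M^n := (X^n)^{\sigma^n_{m(n)}}$ is a true martingale with $P(\sigma^n_{m(n)}<1)<2^{-(n+1)}$, apply Theorem~\ref{c1} verbatim to $(M^n)$, and then transfer the convergence back to the unlocalised processes on the sets $F_n = \bigcap_{k\ge n}\{\sigma^k_{m(k)}=1\}$, whose complements shrink to a null set. This uses Theorem~\ref{c1} exactly as proved and avoids re-examining Lemmas~\ref{A1}--\ref{lE6}. You should replace your appeal to an unproved supermartingale version of Theorem~\ref{c1} by this localisation step (or else supply the missing verification that the lemmas survive the passage from $=$ to $\le$).

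Two minor points. First, your phrasing ``$E[A^n_1] = E[M^n_1] - E[X^n_1] \le M^n_0 = 1$'' silently drops the step $E[M^n_1] - E[X^n_1] \le E[M^n_1] \le 1$; harmless, but worth writing out. Second, you correctly take $X := Y - A$ with $A$ the Campi--Schachermayer limit of the finite-variation parts; the printed line ``$\widetilde X := \widehat X - \widehat A$'' in the paper's proof appears to be a typo for $\widehat X - \widetilde A$, which is what you (and the surrounding text) actually need.
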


We thank Kostas Kardaras for indicating
that
convergence~\eqref{C2} is \emph{topological}. It corresponds
to the weak topology that is generated on the space of optional
processes by the topology of $L^0(P)$ and all evaluation mappings
$e_\tau(X) (\omega):= X_{\tau(\omega)}(\omega)$ that evaluate an optional
process $X=(X_t)_{0 \leq t \leq1}$ at a finite stopping time~$\tau$.
By the optional cross section theorem this topology is Hausdorff.

Given Theorem~\ref{c1} and Theorem~\ref{t1} above one can ask
conversely what the smallest class of stochastic processes is that is
closed under convergence in probability at all finite stopping times
and contains the set of bounded martingales.
Here the next result shows that this set is the set of optional strong
supermartingales.

\begin{theorem}\label{t2}
Let $X=(X_t)_{0 \leq t \leq1}$ be an optional strong supermartingale
and suppose that its stochastic base $(\Omega,\mathcal{F},\mathbb
{F},P)$ is sufficiently
rich to support a Brownian motion $W=(W_t)_{0\leq t\leq1}$. Then there
is a sequence of bounded c\`adl\`ag martingales $(M^n)^\infty_{n=1}$ such
that, for every $[0,1]$-valued stopping time $\tau$, we have
convergence in probability, that is,
%
\begin{equation}
\label{eqt2} M^n_\tau\stackrel{P}{\longrightarrow} X_\tau.
\end{equation}
\end{theorem}

We thank Perkowski and Ruf for pointing out to us that they have
independently obtained a similar result to Theorem~\ref{t2} for c\`adl\`ag supermartingales in Proposition~5.9 of \cite{PR} by taking several
limits successively. Moreover, we would like to thank Ruf for insisting
on a clarification of an earlier version of Theorem~\ref{t2} which led
us to a correction of the statement [convergence in probability in
\eqref{eqt2} as opposed to almost sure convergence] as well as to a
more detailed proof.

Let us now turn to the theme of stochastic integration. By Theorem~\ref{c1} the limit of a sequence $(M^n)^\infty_{n=1}$
of martingales in the
sense of \eqref{M2} will, in general, be no longer a semimartingale. In
order to come up with a similar convergence result for stochastic
integrals $\varphi\bolds{\cdot}M^n=\int
\varphi \,dM^n$, we therefore need to
restrict the choice of integrands $\varphi=(\varphi_t)_{0 \leq t \leq
1}$ to predictable finite variation processes. As we shall explain in
more detail in Section~\ref{sec7} below, this allows us to define
stochastic integrals $\varphi\bolds{\cdot}X=\int\varphi \,dX$ with respect
to optional strong supermartingales $X=(X_t)_{0 \leq t \leq1}$
pathwise, since $X$ is l\`adl\`ag. These integrals coincide with the
usual stochastic integrals, if $X=(X_t)_{0 \leq t \leq1}$ is a
semimartingale. For a general predictable, finite variation process
$\varphi$, the stochastic integral $\varphi\bolds{\cdot}X$ depends not only
on the values of the integrator $X$ but also explicitly on that of its
left limits $X_-$; see \eqref{defSI2} below. As a consequence, in
order to obtain a satisfactory convergence result for the integrals
$\varphi\bolds{\cdot}X^n$ to a limit $\varphi
\bolds{\cdot}X$, we have to take
special care of the left limits of the integrators. (The convergence of
stochastic integrals is crucially needed in applications in
mathematical finance, where the integrals correspond to the gains from
trading by using self-financing trading strategies.)\vspace*{-1pt} More precisely:
given the convergence $\widetilde{X}^n_\tau\stackrel{P}{\longrightarrow} X_\tau$ as in \eqref{C2}, at all $[0,1]$-valued stopping times
$\tau$
of a sequence $(\widetilde{X}^n)^\infty_{n=1}$ of optional strong
supermartingales do we have the convergence of the left limits
%
\begin{equation}
\label{eqcll}
\widetilde{X}^n_{\sigma-} \stackrel{P} {
\longrightarrow} X_{\sigma-}
\end{equation}
for all $[0,1]$-valued stopping times $\sigma$ as well?

For \emph{totally inaccessible} stopping times $\sigma$, we are able to
prove that \eqref{eqcll} is actually the case.

\begin{proposition}\label{propti}
Let $(X^n)^{\infty}_{n=1}$ and $X$ be nonnegative optional strong
supermartingales $(X^n_t) _{0\leq t \leq1}$ and $(X_t) _{0\leq t \leq
1}$ such that
\[
X^n_q\stackrel{P} {\longrightarrow}X_q
\]
for every rational number $q\in[0,1]$. Then
\[
X^n_{\tau-}\stackrel{P} {\longrightarrow}X_{\tau-}
\]
for all $[0,1]$-valued \emph{totally inaccessible} stopping times
$\tau$.
\end{proposition}

At accessible stopping times $\sigma$, the convergence $\widetilde
{X}^n_\tau\stackrel{P}{\longrightarrow}X_{\tau}$ for all finite
stopping times $\tau$ does not necessarily imply\vspace*{1.5pt} convergence \eqref{eqcll} of the left limits $\widetilde{X}^n_{\sigma-}$. Moreover, even
if the left limits $\widetilde{X}^n_{\sigma-}$ converge to some
random variable
$Y$ in probability, it may happen that $Y \neq X_{\sigma-}$. In order
to take this phenomenon into account, we need to consider two processes
$X^{(0)}=(X^{(0)}_t)_{0 \leq t \leq1}$ and $X^{(1)}=(X^{(1)}_t)_{0
\leq t \leq1}$ that correspond\vspace*{1.5pt} to the limiting processes of the left
limits $\widetilde{X}^n_-$ and the processes $\widetilde{X}^n$ itself
or,\vspace*{1pt} alternatively, replace the time interval $I=[0,1]$ by the set
$\widetilde{I}
=[0,1]\times\{0,1\}$ with the lexicographic order. The set $\widetilde
{I}$ is
motivated by the \emph{Alexandroff double arrow space}. Equipping the
set $\widetilde{I}$ with the lexicographic order simply means that we
split every
point $t\in[0,1]$ into a left and a right point $(t,0)$ and $(t,1)$,
respectively, such that $(t,0) < (t,1)$, that $(t,0) \leq(s,0)$ if and
only if $t\leq s$ and that $(t,1) < (s,0)$ if and only if $t<s$. Then\vspace*{0.5pt}
we can merge both processes, $X^{(0)}=(X^{(0)}_t)_{0 \leq t \leq1}$
and $X^{(1)}=(X^{(1)}_t)_{0 \leq t \leq1}$, into one process,
%
\begin{equation}
\label{eqADAS} X_{\tilde{t}} = %
\cases{ X^{(0)}_t,
&\quad$\tilde{t} = (t,0)$,
\vspace*{2pt}\cr
X^{(1)}_t, &\quad$\tilde{t} =
(t,1)$, }
\end{equation}
for $\tilde{t}\in\widetilde{I}$, which is by \eqref{eqrelt3} below a
supermartingale indexed by $\tilde{t}\in\widetilde{I}$. As the limit
of the left
limits, the process $X^{(0)}=(X^{(0)}_t)_{0 \leq t \leq1}$ will be
predictable and it will turn out that it is even a predictable strong
supermartingale. We refer to the article of Chung and Glover \cite
{CG79} (see the second remark following the proof of Theorem~3 on page
243) as well as Definition~3 in Appendix~I of the book of Dellacherie
and Meyer \cite{DM82} for the subsequent concept.

\begin{definition}\label{defpred}
A real-valued stochastic process $X=(X_t)_{0 \leq t \leq1}$ is called
a \emph{predictable strong supermartingale} if:
\begin{longlist}[(3)]
\item[(1)] $X$ is predictable;
\item[(2)] $X_{\tau}$ is integrable for every $[0,1]$-valued
\emph{predictable} stopping time $\tau$;
\item[(3)] for all \emph{predictable} stopping times
$\sigma$
and $\tau$ with $0 \leq\sigma\leq\tau\leq1$, we have
\[
X_\sigma\geq E[X_\tau| \mathcal{F}_{\sigma-}].
\]
\end{longlist}
\end{definition}

After these preparations we are able to extend Theorem~\ref{t1} to hold
also for left limits.

\begin{theorem}\label{t3}
Let $(X^n)^\infty_{n=1}$ be a sequence\vspace*{1pt} of nonnegative optional strong
supermartingales starting at $X^n_0=1$. Then there is a sequence
$(\widetilde{X}
^n)^\infty_{n=1}$ of convex combinations $\widetilde{X}^n\in
\operatorname{conv}(X^n, X^{n+1},
\ldots)$, a nonnegative optional strong supermartingale
$X^{(1)}=(X^{(1)}_t)_{0 \leq t \leq1}$ and a nonnegative predictable
strong supermartingale $X^{(0)}=(X^{(0)}_t)_{0 \leq t \leq1}$ such that
%
\begin{eqnarray}\label{eqt31}
\widetilde{X}^n_{\tau}  &\stackrel{P}{\longrightarrow}&
X^{(1)}_\tau,
\\
\label{eqt32}
\widetilde{X}^n_{\tau-} &\stackrel{P} {\longrightarrow}&
X^{(0)}_\tau,
\end{eqnarray}
for all $[0,1]$-valued stopping times $\tau$, and we have that
%
\begin{equation}
X^{(1)}_{\tau-}\geq X^{(0)}_{\tau} \geq E
\bigl[X^{(1)}_{\tau} | \mathcal {F}_{\tau-}\bigr]
\label{eqrelt3}
\end{equation}
for all $[0,1]$-valued \emph{predictable} stopping times $\tau$.
\end{theorem}

With the above we can now formulate the following proposition. Note
that, since $\varphi\bolds{\cdot}\widetilde
{X}^n \in\operatorname{conv}(\varphi\bolds{\cdot}
X^n, \varphi\bolds{\cdot}X^{n+1}, \ldots)$,
part (2) is indeed an analogous
result to Theorem~\ref{t1} for stochastic integrals.

\begin{proposition}\label{pSI}
Let $(X^n)^\infty_{n=1}$ be a sequence of nonnegative optional strong
supermartingales $X^n=(X^n_t)_{0\leq t\leq1}$ starting at $X^n_0 = 1$.
Then there exist convex combinations $\widetilde{X}^n \in
\operatorname{conv}(X^n,
X^{n+1}, \ldots)$ as well as an optional and a predictable strong
supermartingale $X^{(1)}$ and $X^{(0)}$ such that:
\begin{longlist}[(2)]
\item[(1)] $\widetilde{X}^n_\tau\stackrel{P}{\longrightarrow}
X^{(1)}_\tau$ and $\widetilde{X}^n_{\tau-} \stackrel
{P}{\longrightarrow
} X^{(0)}_\tau$ for all $[0,1]$-valued stopping times $\tau$;
\item[(2)] for all predictable processes $\varphi=(\varphi_t)_{0
\leq t \leq1}$ of finite variation, we have that
\[
\varphi \bolds{\cdot} {\widetilde{X}}^n_\tau
\stackrel{P}{\longrightarrow} \int^\tau_0
\varphi^c_u \,d X^{(1)}_u + \sum
_{0 < u \leq\tau} \Delta \varphi _u
\bigl(X^{(1)}_\tau- X^{(0)}_u\bigr) +
\sum_{0 \leq u < \tau} \Delta_+ \varphi _u
\bigl(X^{(1)}_\tau- X^{(1)}_u\bigr)
\]
for all $[0,1]$-valued stopping times $\tau$, where $\varphi^c$ denotes
the continuous part of $\varphi$, that is,
%
\begin{equation}\label{defcont}
\varphi^c_t:= \varphi_t - \sum
_{0 < u \leq t} \Delta\varphi_u - \sum
_{0 \leq u < t} \Delta_+ \varphi_u\qquad \mbox{for }t \in
[0,1].
\end{equation}
\end{longlist}
\end{proposition}

\section{Proof of Theorems \texorpdfstring{\protect\ref{c1}}{2.6} and \texorpdfstring{\protect\ref{t1}}{2.7}}\label{sec3}

The basic idea for the proof of Theorem~\ref{c1} is to consider the
Fatou limit $\overline{X}=(\overline{X}_t)_{0 \leq t \leq1}$ as
defined in \eqref{defFatou}.
Morally speaking $\overline{X}=(\overline{X}_t)_{0 \leq t \leq1}$
should also be the
limit of the sequence $(\overline{M})^\infty_{n=1}$ in the sense of
\eqref{M2}.
However, as we illustrated in Example~\ref{Ex0}, things may be more
delicate. While we do not need to have convergence in probability at
all finite stopping times in general, the next lemma shows that we
always have one-sided $P$-almost sure convergence.

\begin{lemma}\label{A1}
Let $\overline{X}$ and $(\overline{M}^n)^\infty_{n=1}$ be as in
Proposition~\ref{pFatou}. Then we have that
%
\begin{equation}
\label{chrisA1}
\bigl(\overline{M}^n_\tau-
\overline{X}_\tau\bigr)^- \mathop{\longrightarrow}^{P\mbox{-}\mathit{a.s.}} 0,\qquad
\mbox{as } n\to\infty,
\end{equation}
for all $[0,1]$-valued stopping times $\tau$, where $x^-=\max\{-x,0\}$.
\end{lemma}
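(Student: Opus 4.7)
The plan is to establish the stronger almost-sure inequality
$$\bX_\tau\le\liminf_{n\to\infty}\bM^n_\tau\qquad\text{a.s.},$$
from which \eqref{chrisA1} follows at once: $(\bM^n_\tau-\bX_\tau)^-=(\bX_\tau-\bM^n_\tau)^+$, and the $\limsup$ of the right-hand side is dominated by $(\bX_\tau-\liminf_n\bM^n_\tau)^+\le 0$. The essential task is therefore to transfer the rational-time almost-sure convergence $\bM^n_q\to Z_q$ from Proposition~\ref{p:Fatou}, together with the inequality $\bX_q\le Z_q$, to the random time $\tau$.

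For this I would approximate $\tau$ from above by the dyadic stopping times $\tau_k:=2^{-k}\lceil 2^k\tau\rceil$, each of which is a stopping time taking values in the finite set $D_k:=\{j2^{-k}:0\le j\le 2^k\}\subset\mathbb{Q}\cap[0,1]$, and which satisfy $\tau_k\downarrow\tau$. Since $\tau\le\tau_k\le 1$ and $\bM^n$ is a non-negative c\`adl\`ag martingale, optional stopping yields $\bM^n_\tau=E[\bM^n_{\tau_k}\mid\cF_\tau]$. As $\tau_k$ takes only finitely many values, the decomposition $\bM^n_{\tau_k}=\sum_{q\in D_k}\bM^n_q\,\mathbbm{1}_{\{\tau_k=q\}}$ together with Proposition~\ref{p:Fatou} gives $\bM^n_{\tau_k}\to Z_{\tau_k}:=\sum_{q\in D_k}Z_q\,\mathbbm{1}_{\{\tau_k=q\}}$ almost surely as $n\to\infty$. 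Conditional Fatou combined with the inequality $\bX_q\le Z_q$ (which itself follows from the definition $\bX_q=\lim_{s\downarrow q,\,s\in\mathbb{Q}}Z_s$, the supermartingale property of $Z$ along the rationals, and the right continuity of $\FF$, giving $\bX_q$ $\cF_q$-measurable with $Z_q\ge E[\bX_q\mid\cF_q]=\bX_q$) then produces
$$E[\bX_{\tau_k}\mid\cF_\tau]\le E[Z_{\tau_k}\mid\cF_\tau]\le\liminf_{n\to\infty}\bM^n_\tau\qquad\text{a.s.}$$

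It remains to let $k\to\infty$. Right continuity of $\bX$ yields $\bX_{\tau_k}\to\bX_\tau$ almost surely. The supermartingale property of $\bX$ along $\tau\le\tau_{k+1}\le\tau_k$ makes $k\mapsto E[\bX_{\tau_k}]$ non-decreasing and dominated by $E[\bX_\tau]$, while Fatou's lemma gives the reverse inequality in the limit, so $E[\bX_{\tau_k}]\uparrow E[\bX_\tau]$; Scheff\'e's lemma then upgrades the pointwise convergence to convergence in $L^1$. Consequently $E[\bX_{\tau_k}\mid\cF_\tau]\to E[\bX_\tau\mid\cF_\tau]=\bX_\tau$, and combining with the previous display yields $\bX_\tau\le\liminf_n\bM^n_\tau$ a.s.

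The main obstacle, modest though it is, lies precisely in bridging between the countable rational grid of times on which the Fatou limit $Z$ is defined and the random time $\tau$; the finite-valued dyadic approximation $\tau_k$ handles this cleanly. It is worth stressing that the one-sidedness of \eqref{chrisA1} is essential: the reverse inequality $\bX_\tau\ge\limsup_n\bM^n_\tau$ fails in general, as Example~\ref{Ex0} already illustrates at the deterministic time $t=\tfrac12$.
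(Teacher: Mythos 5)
Your proof is correct and follows essentially the same route as the paper's: dyadic approximation of $\tau$ from above, optional sampling to write $\bM^n_\tau = E[\bM^n_{\tau_k}\mid\F_\tau]$, conditional Fatou in $n$, and a limiting argument in $k$. The only cosmetic difference is in the final step: the paper obtains the $k$-limit by noting that $Z_{\sigma_k}\to\bX_\tau$ a.s.\ and in $L^1$ directly from backward supermartingale convergence (its $\sigma_k$ is chosen \emph{strictly} above $\tau$ so that the limit matches the definition of $\bX_\tau$), whereas you detour through the auxiliary inequality $\bX_q\le Z_q$ at rationals, right-continuity of $\bX$, and Scheff\'e's lemma -- equivalent in substance.
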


\begin{pf}
Let $\sigma_k$ be the $k$th dyadic approximation of the stopping time
$\tau$, that is,
%
\begin{equation}\label{A1.1}
\sigma_k:=\inf\{t\in D_k|t>\tau\}\wedge1,
\end{equation}
where $D_k = \{j2^{-k} | j=0,\ldots,2^k\}$. As $\overline{M}^n$ is a
martingale,
we have $\overline{M}^n_{\tau} = E [\overline{M}^n_{\sigma_k} |
\mathcal{F}_{\tau}]$, for
every $n\in\mathbb{N}$, and therefore
\[
\mathop{\underline{\lim}}_{n\to\infty}\overline{M}^n_{\tau}=
\mathop{\underline{\lim}}_{n\to
\infty} E\bigl[\overline{M}
^n_{\sigma_k}\big|\mathcal{F}_\tau\bigr] \geq E \Bigl[
\mathop{\underline{\lim}}_{n\to\infty} \overline{M} ^n_{\sigma_k}
\Big| \mathcal{F}_{\tau}\Bigr] = E [Z_{\sigma_k} | \mathcal
{F}_{\tau}]
\]
for all $k$ by Fatou's lemma, where $Z_q$ is defined in Proposition~\ref{pFatou}, for every $q\in\mathbb{Q}\cap[0,1]$. Since $Z_{\sigma_k}
\to
\overline{X}_{\tau}$ $P$-a.s. and in $L^1(P)$ by backward
supermartingale convergence (see Theorem~V.30 and the proof of Theorem~IV.10 in \cite{DM82}, e.g.), we obtain that
\[
\mathop{\underline{\lim}}_{n\to\infty} \overline{M}^n_{\tau}
\geq\overline {X}_{\tau},
\]
which proves \eqref{chrisA1}.
\end{pf}
For any sequence $(\widehat{M}^n)^\infty_{n=1}$ of convex combinations
\[
\widehat{M}^n \in\operatorname{conv}\bigl(\overline{M}^n,
\overline {M}^{n+1}, \ldots\bigr),
\]
we can use the one-sided convergence \eqref{chrisA1} to show in the
next lemma that at any given stopping time $\tau$, we either have the
convergence of $\widehat{M}^n_\tau$ to $\overline{X}_\tau$ in
probability, or there
exists a sequence $(\widetilde{M}^n)^\infty_{n=1}$ of convex combinations
\[
\widetilde{M}^n \in\operatorname{conv}\bigl(\widehat{M}^n,
\widehat {M}^{n+1}, \ldots\bigr)
\]
and a nonnegative random variable $Y$ such that $\widetilde{M}^n_\tau
\stackrel{P}{\longrightarrow} Y$. In the latter case, $Y \geq
\overline{X}_\tau$
and $E[Y] > E[\overline{X}_\tau]$, as we shall now show.

\begin{lemma}\label{A2}
Let\vspace*{1.5pt} $\overline{X}$ and $(\overline{M}^n)^\infty_{n=1}$ be as in Proposition~\ref{pFatou}, let $\tau$ be a $[0,1]$-valued stopping time and
$(\widehat{M}
^n)^\infty_{n=1}$ a sequence of convex combinations $\widehat{M}^n
\in\operatorname{conv}
(\overline{M}^n, \overline{M}^{n+1}, \dots)$. Then we have\vspace*{-1pt} either
%
\begin{equation}
\label{A2.1}
\bigl(\widehat{M}^n_\tau-
\overline{X}_\tau\bigr)^+ \stackrel {P} {\longrightarrow} 0,\qquad \mbox{as }
n\to\infty,
\end{equation}
with $x^+=\max\{x,0\}$, or there exists a sequence $(\widetilde{M})^\infty_{n=1}$ of convex\vspace*{-1pt} combinations
\[
\widetilde{M}^n \in\operatorname{conv}\bigl(\widehat{M}^n,
\widehat {M}^{n+1},\ldots\bigr) \subseteq\operatorname{conv}\bigl(
\overline{M}^n, \overline{M} ^{n+1}, \ldots\bigr)
\]
and a nonnegative random variable $Y$ such\vspace*{-1pt} that
%
\begin{equation}
\label{A2.3}
\widetilde{M}^n_\tau\stackrel{P}{\longrightarrow} Y, \qquad\mbox{as } n\to\infty,
\end{equation}
and\vspace*{-3pt}
%
\begin{equation}
\label{A2.4}
E[Y_\tau] > E[\overline{X}_{\tau}].
\end{equation}
\end{lemma}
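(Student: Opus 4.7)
The plan is to invoke a dichotomy: either \eqref{A2.1} already holds, or one can extract convex combinations of the $\hM^n$ whose limit in probability has strictly larger expectation than $\bX_\tau$. Assume \eqref{A2.1} fails. Then there exist $\delta>0$ and a subsequence $(n_k)$ with $P(g_k > \delta) > \delta$ for all $k$, where $g_k := (\hM^{n_k}_\tau - \bX_\tau)^+$; in particular $E[g_k \wedge \delta] \geq \delta^2$. The basic idea is to run Koml\'os' lemma on these positive parts and transport the resulting limiting mass back to convex combinations of the $\hM^{n_k}$ themselves, using that the negative parts $(\hM^{n_k}_\tau - \bX_\tau)^-$ are already harmless by Lemma~\ref{A1}.

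First I would note a strengthening of Lemma~\ref{A1} for convex combinations: if $\tM$ is a convex combination of $\bM^n, \bM^{n+1}, \ldots$, then the pointwise inequality $(\sum_k \alpha_k a_k)^- \leq \sum_k \alpha_k a_k^-$ together with Lemma~\ref{A1} yields $(\tM_\tau - \bX_\tau)^- \to 0$ a.s., and the uniform bound $(\bM^k_\tau - \bX_\tau)^- \leq \bX_\tau \in L^1$ upgrades this to $L^1$ by dominated convergence. Now apply Koml\'os' lemma (Corollary~\ref{kl}) to the $L^1$-bounded sequence $(g_k)$ to obtain weights $\lambda^m_k$ supported on $k\geq m$ such that $\tilde g_m := \sum_k \lambda^m_k g_k \to G$ almost surely for some $G \in L^1_+$. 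With the same weights form $\tM^m := \sum_k \lambda^m_k \hM^{n_k} \in \conv(\hM^m, \hM^{m+1}, \ldots)$, and set $r_m := \sum_k \lambda^m_k (\hM^{n_k}_\tau - \bX_\tau)^-$. The remainder satisfies $E[r_m] \leq \sup_{k\geq m} E[(\hM^{n_k}_\tau - \bX_\tau)^-] \to 0$, so the decomposition
$$\tM^m_\tau = \bX_\tau + \tilde g_m - r_m$$
proves $\tM^m_\tau \to \bX_\tau + G =: Y$ in probability, establishing~\eqref{A2.3}.

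It remains to show $E[Y] > E[\bX_\tau]$, equivalently $E[G] > 0$. This is the main obstacle: since the $g_k$ need not be uniformly integrable, Fatou only gives $E[G] \leq \liminf E[\tilde g_m]$, and mass could in principle escape to infinity. I would circumvent this by truncating. The sequence $(\sum_k \lambda^m_k (g_k \wedge \delta))_m$ lies in $[0,\delta]$, so a further Koml\'os step yields tail convex-combination weights $\nu^j_m$ such that $\sum_m \nu^j_m \sum_k \lambda^m_k (g_k \wedge \delta) \to G^\delta$ a.s.\ with $G^\delta \leq \delta$. Bounded convergence combined with $E[g_k \wedge \delta] \geq \delta^2$ then yields $E[G^\delta] \geq \delta^2$. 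Since tail convex combinations preserve the a.s.\ limit $\tilde g_m \to G$, the same $\nu^j_m$-combinations of the $\tilde g_m$ still converge a.s.\ to $G$, and the pointwise inequality $g_k \wedge \delta \leq g_k$ gives $G^\delta \leq G$, so $E[G] \geq \delta^2$. Passing to the double convex combinations $\sum_m \nu^j_m \tM^m$, which lie in $\conv(\hM^{n_j}, \hM^{n_{j+1}}, \ldots) \subseteq \conv(\hM^j, \hM^{j+1}, \ldots)$ because $n_j \geq j$, and relabeling $j \mapsto n$ produces the sequence required by the lemma with $E[Y] = E[\bX_\tau] + E[G] \geq E[\bX_\tau] + \delta^2$, which is~\eqref{A2.4}.
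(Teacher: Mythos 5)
Your proof is correct, and it reorganizes the argument in a way that differs from the paper's while resting on the same two pillars: Koml\'os applied after passing to a subsequence where the positive deviation has mass bounded below, and a separate step to certify that this mass cannot escape to infinity under the Koml\'os averaging. The paper applies Koml\'os directly to the random variables $\hM^n_\tau$, then establishes the strict inequality by introducing the indicators $\mathbbm{1}_{\{\hM^n_\tau\geq\bX_\tau-\ve\}}$ and $\alpha\mathbbm{1}_{A_n}$, passing to a further convex combination of these bounded sequences, and letting $\ve\to 0$. You instead apply Koml\'os to the positive parts $g_k=(\hM^{n_k}_\tau-\bX_\tau)^+$, which, together with the $L^1$-strengthened version of Lemma~\ref{A1} for the negative parts, yields the transparent identity $\tM^m_\tau=\bX_\tau+\tilde g_m-r_m$ with $r_m\to 0$ in $L^1$; the uniform-integrability obstruction is then handled by a second Koml\'os step on the truncations $g_k\wedge\delta$ rather than on indicators. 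Your route makes the structure of the limit, $Y=\bX_\tau+G$, explicit from the outset and cleanly separates the convergence claim from the lower bound $E[G]\geq\delta^2$; the paper's route avoids naming $G$ but at the cost of carrying the auxiliary $\ve$ through the argument. Two minor points worth stating explicitly if you write this up: that tail convex combinations of nonnegative a.s.-null sequences remain a.s.\ null (for $r_m\to 0$ a.s.\ as well as in $L^1$), and that the final double combinations $\sum_m\nu^j_m\tM^m_\tau$ still converge in probability to $Y$ because $\sum_m\nu^j_m\tilde g_m\to G$ a.s.\ and $\sum_m\nu^j_m r_m\to 0$ in $L^1$ -- you gesture at this but it is the one place where ``tail convex combinations preserve the limit'' requires a word of justification for convergence in probability rather than a.s.\ convergence.
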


\begin{pf}
If \eqref{A2.1} does not hold, there exists $\alpha>0$ and a
subsequence $(\widehat{M}^n)$, still denoted by $(\widehat
{M}^n)^\infty
_{n=1}$ again indexed by $n$,\vspace*{-1pt} such that
%
\begin{equation}
\label{A2.56}
P\bigl(\widehat{M}^n_\tau-
\overline{X}_\tau>\alpha\bigr)\geq\alpha
\end{equation}
for all $n$. 
Since $E[\widehat{M}^n_\tau]=1$, there exists by Koml\'os's lemma a
sequence $(\widetilde{M}^n)^\infty_{n=1}$ of convex combinations
$\widetilde{M}^n \in\operatorname{conv}(\widehat{M}^n, \widehat{M}^{n+1}, \ldots)$ and a
nonnegative random variable $Y$ such that \eqref{A2.3} holds.
To see \eqref{A2.4}, we observe that, for\vspace*{-1pt} each $\varepsilon>0$,
\[
\mathbh{1}_{\{\widehat{M}^n_\tau\geq\overline{X}_\tau
-\varepsilon\}} \stackrel{P}{\longrightarrow} 1,\qquad \mbox{as }  n\to
\infty,
\]
by \eqref{chrisA1}. From the\vspace*{-1pt} inequality
\[
\widehat{M}^n_\tau\mathbh{1}_{A_n} \geq
\overline{X}_\tau\mathbh{1}_{A_n} + \alpha
\mathbh{1}_{A_n},
\]
where $A_n:=\{\widehat{M}^n_\tau\geq\overline{X}_\tau+\alpha\}$,
we\vspace*{-1pt} obtain
\[
\widehat{M}^n_\tau\mathbh{1}_{\{\widehat{M}^n_\tau\geq\overline
{X}_\tau-\varepsilon\}} \geq
\overline{X}_\tau\mathbh{1}_{\{\widehat{M}^n_\tau\geq\overline
{X}_\tau-\varepsilon\}} +\alpha\mathbh{1}_{A_n}.
\]
Now taking the convex combinations leading to $\widetilde{M}^n$ and\vspace*{-1pt} then
\[
\widetilde{Y}^n \in\operatorname{conv}
(\alpha\mathbh{1}_{A_n}, \alpha\mathbh{1}_{A_{n+1}},\ldots)
\]
such that $\widetilde{Y}^n \stackrel{P}{\longrightarrow} \widetilde{Y}$, as $n\to\infty$, we\vspace*{-1pt} derive
%
\begin{equation}
\label{A2.57}
Y \geq\overline{X}_\tau+\widetilde{Y}-\varepsilon
\end{equation}
by passing to limits.
Since $|\widetilde{Y}^n |\le1$ and $E[\widetilde{Y}^n]\geq\alpha^2$,
we deduce from\break Lebesgue's theorem that $\widetilde{Y}^n \stackrel
{L^1(P)}{\longrightarrow} \widetilde{Y}$, as $n\to\infty$, and
$E[\widetilde{Y}]\geq\alpha^2$. Therefore \eqref{A2.57} implies that
\[
E[Y] \geq E[\overline{X}_\tau] +E[\widetilde{Y}]-\varepsilon\geq E[
\overline {X}_\tau] +\alpha^2-\varepsilon
\]
for each\vadjust{\goodbreak} $\varepsilon>0$ and hence \eqref{A2.4} by sending
$\varepsilon\to0$.
\end{pf}
By the previous lemma we either already have the convergence of
$\widehat{M}^n_\tau$ to $\overline{X}_\tau$ in probability at a
given stopping\vspace*{2pt}
time $\tau$, or we can use Koml\'os's lemma once again to find convex
combinations\vspace*{-1pt} $\widetilde{M}^n \in\operatorname{conv}(\widehat{M}^n,
\widehat
{M}^{n+1}, \ldots)$ and a random variable $Y$ such that $\widetilde
{M}^n_\tau\stackrel{P}{\longrightarrow} Y$. The next lemma shows that
we can exhaust this latter phenomenon by a countable number\vspace*{1pt} of stopping
times $(\tau_m)^\infty_{m=1}$ and that we can use the random variables
$Y_m:=P-\lim_{n\to\infty} \widetilde{M}^n_{\tau_m}$ to redefine
the c\`adl\`ag supermartingale $\overline{X}$ at the stopping\vspace*{1pt} times $\tau_m$
to obtain
a limiting process $\widetilde{X}=(\widetilde{X}_t)_{0\leq t\leq1}$. The
limiting process $\widetilde{X}$ will be an optional strong
supermartingale, and
we can relate the loss of mass $Y_m- \overline{X}_{\tau_m}$ to the
right jumps\vspace*{1.5pt}
$\Delta_+ \widetilde{A}_{\tau_m}$ of the predictable part of the
Mertens decomposition $\widetilde{X} = \widetilde{M} - \widetilde{A}$.

\begin{lemma}\label{lE6}
In the setting of Proposition~\ref{pFatou}, let $(\tau
_m)_{m=1}^\infty
$ be a sequence of $[0,1]\cup\{\infty\}$-valued stopping times with
disjoint\vspace*{1pt} graphs, that is, $\llbracket\tau_m\rrbracket\cap\llbracket
\tau_k \rrbracket= \varnothing$ for $m\ne k$. Then there exists a
sequence $(\widetilde{M}^n)_{n=1}^\infty$ of convex combinations
$\widetilde{M}^n\in\operatorname{conv}
(\overline{M}^n,\overline{M}^{n+1},\ldots)$ such that, for each
$m\in\mathbb{N}$, the\vspace*{1pt} sequence
$(\widetilde{M}^n_{\tau_m})^\infty_{n=1}$ converges $P$-a.s. to a
random variable
$Y_m$ on $\{\tau_m < \infty\}$. The process $\widetilde{X}=(\widetilde{X}_t)_{0\le t\le1}$ given by
%
\begin{equation}
\label{eqlsup}
\widetilde{X}_t(\omega) = \cases{
Y_{m}(\omega), & \quad $t=\tau_m(\omega)<\infty$ and
$m\in\mathbb {N}$,\vspace*{3pt}\cr
\overline{X}_t(\omega), &\quad$\mbox{elsewhere}$}
\end{equation}
is an optional strong supermartingale with the following properties:
\begin{longlist}[(2)]
\item[(1)] $\widetilde{X}_+=\overline{X}$,\vspace*{1.5pt} where $\widetilde{X}_+$
denotes the process of the right limits of $\widetilde{X}$;
\item[(2)] denoting by $\widetilde{X} =\widetilde{M}-\widetilde{A}$, the
Mertens decomposition of $\widetilde{X}$, we have
%
\begin{equation}
\label{E6}
\widetilde{X}_{\tau_m} -\overline{X}_{\tau_m} = -
\Delta_+ \widetilde {X}_{\tau_m} = \Delta_+\widetilde{A}_{\tau_m} :=
\widetilde {A}_{{\tau
_m}_+} -\widetilde{A}_{\tau_m}
\end{equation}
for each $m\in\mathbb{N}$.
\end{longlist}
\end{lemma}

\begin{pf}
Combining Koml\'os's lemma with a diagonalization procedure we obtain
nonnegative random variables $Y_m$ and convex combinations
$\widetilde{M}^n \in\operatorname{conv}(\overline{M}^n, \overline
{M}^{n+1},\ldots)$ such that
\[
\widetilde{M}^n_{\tau_m} \mathop{\longrightarrow}^{P\mbox{-}\mathrm{a.s.}}
Y_m,
\]
for all $m\in\mathbb{N}$, and we can define the process $\widetilde
{X}$ via \eqref{eqlsup}. This process $\widetilde{X}$ is clearly optional.

To show that $\widetilde{X}$ is indeed an optional strong
supermartingale, we need to verify that
%
\begin{equation}
\label{lE61}
\widetilde{X}_{\varrho_1} \geq E[\widetilde{X}_{\varrho_2}
|\mathcal{F}_{\varrho_1}]
\end{equation}
for every pair of $[0,1]$-valued stopping times $\varrho_1$ and
$\varrho_2$
such that $\varrho_1\leq\varrho_2$. For this, we observe that it is
sufficient
to consider \eqref{lE61} on the set $\{\varrho_1 <\varrho_2\}$.
For $i=1,2$ denote by $(\varrho_{i,k})_{k=1}^\infty$ the $k$th dyadic
approximation of $\varrho_i$ as in \eqref{A1.1} above. Then we have
%
\begin{eqnarray}
&& E[\widetilde{X}_{\varrho_2}|\mathcal{F}_{\varrho_1}]\nonumber\\
&&\qquad= E \Biggl[\lim_{n\to\infty} \sum_{m=1}^\infty
\widetilde{M}^n_{\tau_m}\mathbh{1}_{\{\tau_m=\varrho
_2\}} +\lim_{k\to\infty} \Bigl(\lim_{n\to\infty} \overline
{M}^n_{\varrho_{2,k}} \Bigr)\mathbh{1}_{\{\tau_m\ne\varrho_2, \forall m\}} \Big|\mathcal
{F}_{\varrho_1} \Biggr]
\nonumber
\\
&&\qquad=E \Biggl[\lim_{n\to\infty} \sum_{m=1}^\infty
\widetilde {M}^n_{\tau
_m}\mathbh{1}_{\{\tau_m=\varrho_2\}} +\lim
_{k\to\infty} \Bigl(\lim_{n\to\infty} \widetilde{M}^n_{\varrho_{2,k}}
\Bigr)\mathbh{1}_{\{\tau_m\ne
\varrho_2, \forall m\}} \Big|\mathcal{F}_{\varrho_1} \Biggr]
\nonumber
\\
&&\qquad\leq   E \Biggl[\lim_{n\to\infty} \sum_{m=1}^\infty
\widetilde{M}^n_{\tau
_m}\mathbh{1}_{\{\tau_m=\varrho_2\}}
\nonumber
\\[-8pt]
\label{line3}\\[-8pt]
\nonumber&&\hspace*{46pt}{}+\lim
_{k\to\infty} \Bigl(\lim_{n\to\infty} E\bigl[
\widetilde{M}^n_{\varrho_{2,k}}|\mathcal{F}_{\varrho_{2}}\bigr]
\Bigr)\mathbh{1}_{\{\tau_m\ne\varrho_2,
\forall m\}} \Big|\mathcal{F}_{\varrho_1} \Biggr]
\\
\label{line4}
&&\qquad=  E \Bigl[\lim_{n\to\infty}\widetilde{M}^n_{\varrho_2}
\big|\mathcal{F}_{\varrho_1} \Bigr]
\\
\label{line5}
&&\qquad\leq  E \Bigl[\lim_{k\to\infty}\lim_{n\to\infty}E\bigl[
\widetilde {M}^n_{\varrho_2}|\mathcal{F}_{\varrho
_{1,k}}\bigr] \big|
\mathcal{F}_{\varrho_1} \Bigr]
\\
\label{line6}
&&\qquad=E \Bigl[\lim_{k\to\infty}\lim_{n\to\infty}\widetilde
{M}^n_{\varrho_{1,k}} \big|\mathcal{F} _{\varrho_1}
\Bigr]
\\
&&\qquad= E \Biggl[\lim_{k\to\infty}\lim_{n\to\infty} \sum
_{m=1}^\infty\widetilde {M}^n_{\varrho_{1,k}}
\mathbh{1}_{\{\tau_m=\varrho_1\}} +\lim_{k\to\infty}\lim_{n\to
\infty}
\widetilde{M}^n_{\varrho_{1,k}}\mathbh{1}_{\{\tau_m\ne
\varrho_1, \forall m\}} \Big|
\mathcal{F}_{\varrho_1} \Biggr]
\nonumber
\\
&&\qquad\le  \lim_{k\to\infty}\lim_{n\to\infty} \sum
_{m=1}^\infty E\bigl[\widetilde {M}^n_{\varrho_{1,k}}|
\mathcal{F}_{\varrho_1}\bigr]\mathbh{1}_{\{\tau
_m=\varrho_1\}}
\nonumber
\\[-8pt]
\label{line8}\\[-8pt]
\nonumber
&&\quad\qquad{}+E \Bigl[\lim
_{k\to\infty}\lim_{n\to\infty} \overline{M}^n_{\varrho_{1,k}}
\big|\mathcal{F}_{\varrho_1} \Bigr]\mathbh{1}_{\{\tau_m\ne\varrho_1, \forall m\}}
\\
\label{line9}
&&\qquad=\lim_{n\to\infty}\sum_{m=1}^\infty
\widetilde{M}^n_{\tau
_m}\mathbh{1}_{\{\tau_m=\varrho_1\}} +E \Bigl[
\lim_{k\to\infty} Z_{\varrho_{1,k}} \big|\mathcal{F}_{\varrho_1}
\Bigr]\mathbh{1}_{\{\tau_m\ne
\varrho_1, \forall m\}
}
\\
\label{line10}
&&\qquad=\sum_{m=1}^\infty\widetilde{X}_{\tau_m}
\mathbh{1}_{\{\tau
_m=\varrho_1\}
} +\overline{X}_{\varrho_1}\mathbh{1}_{\{\tau_m\ne\varrho_1,
\forall m\}}=
\widetilde{X}_{\varrho
_1}
\end{eqnarray}
by\vspace*{1.5pt} using Fatou's lemma in \eqref{line3}, \eqref{line5} and \eqref
{line8}, the martingale property of the $\widetilde{M}^n$ and the
convergence in probability of the $M^n$ in \eqref{line4}, \eqref{line6}
and \eqref{line9} and exploiting the backward supermartingale
convergence of $(Z_{\varrho_{1,k}})_{k=1}^\infty$ in \eqref{line10}.
\begin{longlist}[(2)]
\item[(1)]  We argue by contradiction and assume that $G:=\{\widetilde{X}_+
\neq \overline{X}\}$ has $P(\pi(G))>0$, where $\pi\dvtx \Omega\times[0,1]
\to\Omega$ is
given by $\pi ((\omega,t) )=\omega$. As the set $G$ is
optional, there
exists by the optional cross-section theorem (Theorem IV.84 in \cite{DM82})
a~$[0,1]\cup\{\infty\}$-valued stopping\vspace*{1pt} time $\sigma$ such
that $\llbracket\sigma_{\{\sigma<\infty\}} \rrbracket\subseteq G$ and
\mbox{$P(\sigma< \infty) > 0$}, which is equivalent to the assumption that
the set $F:=\{\widetilde{X}_{\sigma_+}\ne\overline{X}_\sigma\}$
has strictly
positive measure $P(F)>0$. Without loss of generality we can assume
that there exists \mbox{$\delta>0$} such that $F\subseteq\{\sigma+\delta
<1\}$.
Let $(h_i)_{i=1}^\infty$ be a sequence of real numbers decreasing to
$0$ that are no atoms of the laws $\tau_m-\sigma$ for all $m\in
\mathbb{N}$.
Then defining $\sigma_i:=(\sigma+h_i)_F\wedge1$ for each $i\in
\mathbb{N}$
gives a sequence\vspace*{1pt} of stopping times such that $\widetilde{X}_{\sigma_i}
=\overline{X}_{\sigma_i}$ for each $i$ and
$\sigma_i \searrow\sigma$ on $F$. But this implies that
%
\begin{equation}
\widetilde{X}_{\sigma+} =\lim_{i\to\infty} \widetilde
{X}_{\sigma_i} =\lim_{i\to\infty} \overline{X}_{\sigma_i}=
\overline {X}_\sigma\qquad \mbox{on }  F,
\end{equation}
which contradicts $P(F)>0$ and hence also $P(\pi(G))> 0$.

\item[(2)]  By property\vspace*{2pt} (1), modifying $\overline{X}$ at countably many
stopping times $(\tau_m)^\infty_{m=1}$ to obtain $\widetilde{X}$ leaves
right limits of the l\`adl\`ag optional strong supermartingale
$\widetilde{X}$ invariant so that these remain
%
\begin{equation}
\widetilde{X}_{\tau_m +} = \overline{X}_{\tau_m^+}=\overline
{X}_{\tau
_m}\qquad \mbox{on } \{\tau_m <1\} \mbox{ for each }  m.
\end{equation}
Since $\widetilde{M}$ is c\`adl\`ag, this implies that
%
\begin{equation}
\widetilde{X}_{\tau_m} -\overline{X}_{\tau_m} = -\Delta_+
\widetilde {X}_{\tau_m} =\Delta_+ \widetilde{A}_{\tau_m}
\end{equation}
for each $m$, thus proving property (2).\quad\qed
\end{longlist}
\noqed\end{pf}

Continuing with the proof of Theorem~\ref{c1}, the idea is to define
the limiting supermartingale $X$ by \eqref{eqlsup} and to use Lemma~\ref{lE6} to enforce the convergence at a well-chosen \emph{countable
number} of stopping times $(\tau_m)_{m=1}^\infty$ to obtain the
convergence in \eqref{C2} for \emph{all} stopping times. It is rather
intuitive that one has to take special care of the jumps of the
limiting process $X$. As these can be exhausted by a sequence $(\tau
_k)^{\infty}_{k=1}$ of stopping times, the previous lemma can take care
of this issue. However, the subsequent example shows that there also
may be a problem with the convergence in \eqref{M2} at a stopping time
$\tau$ at which $\overline{X}$ is \textit{continuous}.

\begin{example}
Let $\sigma\dvtx \Omega\longrightarrow[0,1]$ be a \emph{totally
inaccessible} stopping time, $(A_t)_{0 < t \leq1}$ its\vspace*{1pt} compensator so
that $(\mathbh{1}_{\llbracket\sigma, 1 \rrbracket} (t) - A_t)_{0
\leq
t \leq1}$ is a martingale. Let $(Y_n)^\infty_{n=1}$ be a sequence of
random variables independent of $\sigma$ such that $Y_n$ takes values
in $ \{0, n\}$ and $P[Y_n = n] = \frac{1}{n}$. Define the \emph{continuous} supermartingale
\[
X^1_t = 1 - A_t,\qquad 0 \leq t \leq 1,
\]
and the optional strong supermartingale
\[
X^2_t = 1 - A_t + \mathbh{1}_{\llbracket\sigma\rrbracket}
(t),\qquad 0 \leq t \leq1.
\]
Define the sequences $(M^{1,n})^\infty_{n=1}$ and $(M^{2,n})^\infty
_{n=1}$ of martingales by
\begin{eqnarray*}
M^{1,n}_t &=& 1 - A_t +Y_n
\mathbh{1}_{\llbracket\sigma,1\rrbracket} (t),
\\
M^{2,n}_t &=& 1 - A_t + \mathbh{1}_{\llbracket\sigma,1\rrbracket}
(t) + (Y_n-1)\mathbh{1}_{\llbracket\sigma+{1}/{n},1\rrbracket} (t)
\end{eqnarray*}
for $t \in[0,1]$ and $n \in\mathbb{N}$. Then we have that
%
\begin{eqnarray}
M^{1,n}_\tau & \stackrel{P}\longrightarrow &
X^1_\tau,
\nonumber
\\[-8pt]
\label{eqF11}\\[-8pt]
\nonumber
M^{2,n}_\tau & \stackrel{P}\longrightarrow &
X^2_\tau
\end{eqnarray}
for all $[0,1]$-valued stopping times $\tau$.
The\vspace*{1pt} left and right limits of $X^1$ and $X^2$ coincide, that is,
$X^1_{-}=X^2_-$ and $X^1_{+}=X^2_{+}$, but $X^1\ne X^2$. As $X^1 =
X^1_- = X^1_{+} = X^2_{+}$ coincides with the Fatou limits $\overline
{X}^1$ (and $\overline{X}^2$, resp.) of $(M^{1,n})^\infty_{n=1}$ [and
$(M^{2,n})^\infty_{n=1}$, resp.], this example illustrates that we cannot
deduce from the Fatou limits $\overline{X}^1$ and $\overline{X}^2$,
where it is necessary to correct the convergence by using Lemma~\ref{lE6}. Computing the Mertens decompositions $X^1=M^1-A^1$ and
$X^2=M^2-A^2$, we obtain
\begin{eqnarray*}
M^1&= &1,
\\
A^1&=& \sigma\wedge t,
\\
M^2&=& 1-\sigma\wedge t+\mathbh{1}_{\llbracket\sigma,1\rrbracket
},
\\
A^2&=& \mathbh{1}_{\rrbracket\sigma,1\rrbracket}.
\end{eqnarray*}
This shows that using $X^2$ instead of $\overline{X}^2=X^1$ changes the
compensator of $M^2$ not only after the correction in the sense of
Lemma~\ref{lE6} on $\rrbracket\sigma,1\rrbracket$ but on all of $[0,1]$.
\end{example}

As the previous example shows, it might be difficult to identify the
stopping times $(\tau_m)^\infty_{m=1}$, where one needs to enforce the
convergence in probability by using Lemma~\ref{lE6}. Therefore we
combine the previous lemmas with an exhaustion argument to prove
Theorem~\ref{c1}.
\begin{pf*}{Proof of Theorem~\ref{c1}}
Let\vspace*{1pt} $\mathbb{T}$ be the collection of all families $\mathcal{T}=(\tau
_m)^{N(\mathcal{T}
)}_{m=1}$ of finitely many
$[0,1]\cup\{\infty\}$-valued stopping times $\tau_m$ with disjoint
graphs. For each $\mathcal{T}\in\mathbb{T}$, we consider an
optional strong supermartingale $X^\mathcal{T}$ that is obtained by taking
convex combinations $\widetilde{X}^{n,\mathcal{T}}\in\operatorname
{conv}(\overline{M}^n,\overline{M}^{n+1},\ldots
)$ such that $\widetilde{X}^{n,\mathcal{T}}_{\tau_m} \stackrel{P}{\longrightarrow} Y^\mathcal{T}_m$ on $\{\tau_m < \infty\}$ for each $m=1,\ldots,
N(\mathcal{T})$ and
then setting
%
\begin{equation}
X^\mathcal{T}_t(\omega) = %
\cases{
Y^\mathcal{T}_m(\omega), & \quad$t=\tau_m(\omega)<
\infty$ and $m=1,\ldots, N(\mathcal{T})$,\vspace*{3pt}
\cr
\overline{X}_t(
\omega), &\quad$\mbox{else}$,}
\end{equation}
as explained in Lemma~\ref{lE6}. Then each $X^\mathcal{T}$ has a
Mertens decomposition
%
\begin{equation}
X^\mathcal{T}= M^\mathcal{T}- A^\mathcal{T},
\end{equation}
and we have by part (2) of Lemma~\ref{lE6} that
\[
E \Biggl[ \sum^{N(\mathcal{T})}_{m=1}
\bigl(X^\mathcal{T}_{\tau
_{m}\wedge1} - \overline {X}_{\tau_{m}\wedge1}\bigr)
\Biggr]= E \Biggl[\sum^{N(\mathcal
{T})}_{m=1} \Delta _+
A^\mathcal{T}_{\tau_{m}\wedge1} \Biggr] \le1.
\]

Therefore
%
\begin{equation}
\widehat{\vartheta}:=\sup_{\mathcal{T}\in\mathbb{T}} E \Biggl[ \sum
^{N(\mathcal{T})}_{m=1} \bigl(X^\mathcal{T}_{\tau_{m}\wedge1}
-\overline{X}_{\tau_{m}\wedge
1}\bigr) \Biggr] \le1,
\end{equation}
and there exists a maximizing sequence $(\mathcal{T}_k)_{k=1}^\infty$
such that
%
\begin{equation}
\hspace*{10pt}E \Biggl[ \sum^{N(\mathcal{T}_k)}_{m=1}
\bigl(X^{\mathcal
{T}_k}_{\tau_{m}\wedge1} -\overline{X}_{\tau_{m}\wedge1}\bigr) \Biggr]
\nearrow\sup_{\mathcal{T}\in
\mathbb{T}} E \Biggl[\sum^{N(\mathcal{T})}_{m=1}
\bigl(X^{\mathcal
{T}}_{\tau_{m}\wedge1} -\overline{X}_{\tau_{m}\wedge1}\bigr)
\Biggr] =\widehat{\vartheta}.
\end{equation}
It is easy to see that we can assume that $(\mathcal{T}_k)^\infty
_{k=1}$ can be
chosen to be increasing, that is, $\mathcal{T}_k\subseteq\mathcal
{T}_{k+1}$ for each $k$.
This means that $\mathcal{T}_{k+1}$ just adds some \mbox{stopping} times to
those which
appear in $\mathcal{T}_k$. Then $\widetilde{\mathcal{T}}:=\bigcup
^\infty_{k=1} \mathcal{T}_k$ is a
countable collection of stopping times $(\tau_m)^\infty_{m=1}$ with
disjoint graphs, and by Lemma~\ref{lE6} there exists an optional strong
supermartingale $X^{\widetilde{\mathcal{T}}}$ and convex combinations
$X^{n,\widetilde{\mathcal{T}}}\in\operatorname{conv}(\overline
{M}^n,\overline{M}^{n+1},\ldots
)$ such that $X^{n,\widetilde{\mathcal{T}}}_{\widetilde{\tau}_m}
\stackrel
{P}{\longrightarrow} Y^{\widetilde{\mathcal{T}}}_m$ for all $m$ and
%
\begin{equation}
X^{\widetilde{\mathcal{T}}}_t (\omega):= %
\cases{
Y^{\widetilde{\mathcal{T}}}_m(\omega), & \quad$t=\tau_m(\omega )<
\infty$,\vspace*{3pt}
\cr
\overline{X}_t(\omega), &\quad$
\mbox{else}$.}
\end{equation}
As we can\vspace*{1pt} suppose
without loss of generality
that $X^{n,\mathcal{T}_{k+1}}\in\operatorname{conv}(X^{n,\mathcal
{T}_k},\break  X^{n+1,\mathcal{T}_k},\ldots)$ and
$X^{n,\widetilde{\mathcal{T}}}\in\operatorname{conv}(X^{n,\mathcal
{T}_k}, X^{n+1,\mathcal{T}_{n+1}},\ldots)$,
we have that $Y^{\mathcal{T}_k}_m= Y^{\mathcal{T}_{k+1}}_m =
Y_m^{\widetilde{\mathcal{T}}}$ on $\{
\tau_m <1\}$ for all $k\geq m$. Let $X^{\widetilde{\mathcal{T}}} =
M^{\widetilde
{\mathcal{T}}} -A^{\widetilde{\mathcal{T}}}$ be the Mertens
decomposition of
$X^{\widetilde{\mathcal{T}}}$. Then
%
\begin{equation}
\Delta_+ A^{\widetilde{\mathcal{T}}}_{\tau_m} = X^{\widetilde
{\mathcal{T}}}_{\tau_m} -
\overline{X}_{\tau_m} = X^{\mathcal{T}_k}_{\tau_m} -\overline
{X}_{\tau_m} =\Delta_+ A^{\mathcal{T}_k}_{\tau_m}
\end{equation}
on $\{\tau_m <1\}$ for $m\le N(\mathcal{T}_k)$, since, as we
explained in the
proof of Lemma~\ref{lE6}, modifying $\overline{X}$ at countably many
stopping times does not change the right limits, and these remain
%
\begin{equation}
X^{\widetilde{\mathcal{T}}}_{\tau_m +} =\overline{X}_{\tau_m} =
X^{\mathcal{T}_k}_{\tau
_m+} \qquad\mbox{on $\{\tau_m < 1\}$ for $m\le
N(\mathcal{T}_k)$.}
\end{equation}
This implies that
%
\begin{equation}
\hspace*{8pt}\sum^{N(\mathcal{T}_k)}_{m=1} \bigl(X^{\mathcal{T}_k}_{\tau
_{m}\wedge1}
- \overline {X}_{\tau_{m}\wedge1}\bigr) =\sum^{N(\mathcal{T}_k)}_{m=1}
\bigl(X^{\widetilde{\mathcal{T}
}}_{\tau_{m}\wedge1} -\overline{X}_{\tau_{m}\wedge1}\bigr) =\sum
^{N(\mathcal{T}_k)}_{m=1} \Delta_+ A^{\widetilde{\mathcal
{T}}}_{\tau_{m}\wedge1}
\end{equation}
and therefore
%
\begin{equation}
E \Biggl[ \sum^\infty_{m=1} \Delta_+
A^{\widetilde{\mathcal
{T}}}_{\tau
_{m}\wedge1} \Biggr] = E \Biggl[ \sum
^\infty_{m=1} \bigl(X^{\widetilde{\mathcal{T}
}}_{\tau_{m}\wedge1} -
\overline{X}_{\tau_{m}\wedge1}\bigr) \Biggr] = \widehat{\vartheta}
\end{equation}
by the monotone convergence theorem.

Now suppose that there exists a $[0,1]$-valued stopping time $\tau$
such that $X^{n,{\widetilde{\mathcal{T}}}}_\tau$ does not converge in
probability to $X^{\widetilde{\mathcal{T}}}_\tau$. By Lemma~\ref
{A2} we can then
pass once more to convex
combinations $\widetilde{M}^n \in\operatorname
{conv}(X^{n,{\widetilde{\mathcal{T}}}},
X^{n+1,{\widetilde{\mathcal{T}}}},\ldots)$ such that there exists a random
variable $Y$ such that
$\widetilde{M}^n_\tau\stackrel{P}{\longrightarrow} Y$, $\widetilde
{M}^n_{\tau_m} \stackrel{P}{\longrightarrow} Y^{\widetilde{\mathcal
{T}}}_m$ and
an optional strong supermartingale $\widetilde{X}$ such that
%
\begin{equation}
\widetilde{X}_t(\omega) = %
\cases{ Y(\omega), &\quad $t=
\tau(\omega) \leq1$, \vspace*{3pt}
\cr
X^{\widetilde{\mathcal{T}}}_t(\omega), &
\quad$\mbox{else}$.}
\end{equation}
However, since $E[\widetilde{X}_\tau-\overline{X}_\tau]>0$ by Lemma~\ref{A2}, setting ${\widetilde{\mathcal{T}}}_k:=\mathcal{T}_k\cup\{
\mathcal{T}\}$ gives a
sequence in $\mathbb{T}$ such that
\begin{eqnarray*}
&& \lim_{k\to\infty} E \Biggl[\sum^{N({\widetilde
{\mathcal{T}}}_k)}_{m=1}
\bigl(X^{\widetilde{\mathcal{T}}_k}_{\tau_{m}\wedge1} -\overline {X}^{{\widetilde{\mathcal{T}
}}_k}_{\tau_{m}\wedge1}
\bigr) \Biggr]\\
&&\qquad =\lim_{k\to\infty} E \Biggl[\sum
^{N(\mathcal{T}_k)}_{m=1} \bigl(X^{\mathcal{T}_k}_{\tau_{m}\wedge
1} -
\overline {X}_{\tau_{m}\wedge1}\bigr) \Biggr] + E[\widetilde{X}_\tau-
\overline {X}_\tau ]
\\
&&\qquad= \widehat{\vartheta} + E[\widetilde{X}_\tau-\overline{X}_\tau]
> \widehat{\vartheta},
\end{eqnarray*}
and therefore a contradiction to the definition of $\widehat{\vartheta
}$ as supremum. Here we can take the convex combinations $\widetilde
{M}^n \in\operatorname{conv}(X^{n,{\widetilde{\mathcal{T}}}},
X^{n+1,{\widetilde{\mathcal{T}}}},\ldots
)$ for all
${\widetilde{\mathcal{T}}}_k$.
\end{pf*}
Combining Theorem~\ref{c1} with a similar convergence result for
predictable finite variation processes by Campi and Schachermayer \cite
{CS06}, we now deduce Theorem~\ref{t1} from Theorem~\ref{c1}.
\begin{pf*}{Proof of Theorem~\ref{t1}}
We consider the extension of Theorem~\ref{c1} to local martingales
first. For this, let $(X^n)^\infty_{n=1}$ be a sequence of nonnegative
local martingales $X^n=(X^n_t) _{0\leq t \leq1}$ and $(\sigma
^n_m)^{\infty}_{m=1}$ a localizing sequence of $[0,1]$-valued stopping
times for each $X^n$. Then, for each $n\in\mathbb{N}$, there exists
$m(n)\in\mathbb{N}$
such that $P(\sigma^n_m <1)<2^{-(n+1)}$ for all $m\geq m(n)$. Define
the martingales
%
\begin{equation}
M^n:=\bigl(X^n\bigr)^{\sigma^n_{m(n)}}
\end{equation}
that satisfy $M^k=X^k$ for all $k\geq n$ on $F_n:= \bigcap_{k\geq n} \{\sigma^k_{m(k)} =1\}$ with $P(F_n)>1-2^{-n}$.
By Theorem~\ref{c1} there exist a sequence of convex combinations
$\widetilde{M}
^n \in\operatorname{conv}(M^n, M^{n+1}, \dots)$ and an optional strong
supermartingale $X$ such that
\[
\widetilde{M}^k_{\tau} \stackrel{P} {\longrightarrow}
X_{\tau} \qquad\mbox{on $F_n$}
\]
for all $[0,1]$-valued stopping\vspace*{1.5pt} times $\tau$. Therefore taking
$\widetilde{X}^n
\in\operatorname{conv}(X^n,\break  X^{n+1}, \dots)$ with the same weights
as $\widetilde{M}^n \in
\operatorname{conv}(M^n, M^{n+1}, \dots)$ gives
\[
\widetilde{X}^k_{\tau} \stackrel{P} {\longrightarrow}
X_{\tau} \qquad\mbox{on $F_n$}
\]
for all $[0,1]$-valued stopping times $\tau$ and for each $n$ and,
since $\widetilde{X}^k = \widetilde{M}^k$ for all $k\geq n$. But,
since $P(F_n^c)<2^{-n}
\to0$, as $n\to\infty$ this implies that $\widetilde{X}^k_{\tau}
\stackrel
{P}{\longrightarrow} X_{\tau}$
for all $[0,1]$-valued stopping times $\tau$. This finishes the proof
in the case when the $X^n$ are local martingales.

For the case of optional strong supermartingales, let $(X^n)^\infty
_{n=1}$ be a sequence of nonnegative optional strong supermartingales
$X^n=(X^n_t)_{0 \leq t \leq1}$ and $X^n = M^n - A^n$ their Mertens
decompositions into a c\`adl\`ag local martingale $M^n$ and a
predictable, nondecreasing, l\`adl\`ag process $A^n$. As the local
martingales $M^n \geq X^n + A^n \geq X^n$ are nonnegative, there
exists by the first part of the proof a sequence of convex combinations
$\widehat{M}^n\in\operatorname{conv}(M^n, M^{n+1},\break \dots)$ and\vspace*{1pt} an
optional strong
supermartingale $\widehat{X}$ with Mertens decomposition $\widehat
{X}= \widehat{M}- \widehat{A}$ such that
%
\begin{equation}
\label{chris2} \widehat{M}^n_{\tau} \stackrel{P} {
\longrightarrow} \widehat {X}_{\tau}
\end{equation}
for all $[0,1]$-valued stopping times $\tau$. Now let $\widehat{A}^n
\in\operatorname{conv}
(A^n, A^{n+1},\dots)$ be the convex combinations that are obtained with
the same weights as the $\widehat{M}^n$. Then there exists a sequence
$(\widetilde{A}^n)^\infty_{n=1}$ of convex combinations $\widetilde
{A}^n \in
\operatorname{conv}(\widehat{A}^n, \widehat{A}^{n+1}, \dots)$ and a
predictable, nondecreasing, l\`
adl\`ag process $\widetilde{A}$ such that
%
\begin{equation}
\label{chris1} P \Bigl[\lim_{n\to\infty} \widetilde{A}^n_t
=\widetilde {A}_t, \forall t\in[0,1] \Bigr] =1.
\end{equation}
Indeed, we only need to show that $(\widetilde{A}^n_1)_{n\in\mathbb
{N}}$ is bounded in
$L^0(P)$; then \eqref{chris1} follows from Proposition~3.4 of Campi and
Schachermayer in \cite{CS06}. By monotone convergence we obtain
\[
E\bigl[\widetilde{A}^n_1\bigr] = \lim
_{m\to\infty} E\bigl[\widetilde {A}^n_{1\wedge\sigma^n_m}
\bigr] = \lim_{m\to
\infty} E\bigl[\widetilde{M}^n_{1\wedge\sigma^n_m}
-\widetilde {X}^n_{1\wedge\sigma^n_m}\bigr]\le1
\]
for all $n\in\mathbb{N}$ and therefore the boundedness in $L^0(P)$.
Here $\widetilde{M}^n \in\operatorname{conv}(\widehat{M}^n,\break
\widehat{M}^{n+1}, \ldots)$ and $\widetilde{X}^n \in\operatorname{conv}
(\widehat{X}^n,  \widehat{X}^{n+1}, \dots)$ denote convex
combinations having the same
weights as the $\widehat{A}^n$ and $(\sigma^n_m)^\infty_{m=1}$ is a
localizing
sequence of stopping times for the local martingale~$\widetilde{M}^n$.

Taking convex combinations does not change the convergence \eqref
{chris2}, and so $\widetilde{X}^n \in\operatorname{conv}(X^n,
X^{n+1}, \dots)$ is a sequence
of convex combinations and $\widetilde{X}:=\widehat{X}- \widehat{A}$
an optional strong
supermartingale such that
%
\begin{equation}
\widetilde{X}^n_{\tau} \stackrel{P} {\longrightarrow}
\widetilde {X}_{\tau}
\end{equation}
for all $[0,1]$-valued stopping times $\tau$.
\end{pf*}

\begin{remark} (1) Observe that the proof of Theorem~\ref{t1} actually
shows that the limiting optional strong supermartingale $X$ is equal to
$\overline{X}$ up to a set that is included in the graphs of countably many
stopping times $(\tau_m)^\infty_{m=1}$.

(2) Replacing Koml\'os's lemma (Corollary~\ref{kl}) by Koml\'os's
subsequence theorem (Theorem~\ref{kssthm}) in the proof of Theorems
\ref{c1} and \ref{t1}, we obtain, by taking subsequences of subsequences
rather than convex combinations of convex combinations, the following
stronger assertion: Given a sequence $(X^n)^\infty_{n=1}$ of
nonnegative optional strong supermartingales $X^n=(X^n_t)_{0 \leq t
\leq1}$ starting at $X^n_0=1$, there exists a subsequence
$(X^{n_k})^\infty_{k=1}$ and an optional strong supermartingale
$X=(X_t)_{0 \leq t \leq1}$ such that the Ces\`{a}ro means\break $\frac{1}{J}
\sum^J_{j=1} X^{n_{k_j}}$ of any subsubsequence $(X^{n_{k_j}})^\infty
_{j=1}$ converge to $X$ in probability at all finite stopping times, as
$J\to\infty$.
\end{remark}

\section{A counterexample}\label{sec4}

At a \emph{single} finite stopping time $\tau$ we may, of course, pass
to a subsequence to obtain that $\widetilde{M}^n_\tau$ converges not
only in probability but also $P$-almost surely to $\widetilde{X}_\tau$.
The next proposition shows that we cannot strengthen Theorem~\ref{c1}
to obtain $P$-almost sure convergence for \emph{all} finite stopping
times simultaneously. The obstacle is, of course, that the set of all
stopping times is far from being countable.

\begin{proposition}\label{Ex2}
Let $(M^n)^\infty_{n=1}$ be a sequence of independent nonnegative
continuous martingales $M^n=(M^n_t)_{0 \leq t \leq1}$ starting at
$M^n_0=1$ such that
%
\begin{equation}\label{Ex2.2}
M^n_\tau\stackrel{P} {\longrightarrow} 1-
\tau
\end{equation}
for all $[0,1]$-valued stopping times $\tau$. Then we have for all
$\varepsilon
>0$ and all sequences $(\widetilde{M}^n)^\infty_{n=1}$ of convex
combinations $\widetilde{M}^n\in\operatorname{conv}(M^n, M^{n
+1},\ldots)$ that there
exists a stopping time $\tau$ such that
\[
P \Bigl[\mathop{\overline{\lim}}_{n\to\infty} \widetilde{M}^n_\tau=+
\infty \Bigr] > 1- \varepsilon.
\]
\end{proposition}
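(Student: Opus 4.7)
The strategy is to combine the independence of the $M^n$'s with a Borel--Cantelli argument applied to hitting times. The key quantitative input from the hypothesis is that $P[\sup_{t\leq 1}M^n_t\geq K]\to 1/K$ as $n\to\infty$ for every fixed $K$: applying optional stopping at $\sigma^n_K=\inf\{t:M^n_t\geq K\}\wedge 1$ gives
\[
1=KP[\sigma^n_K<1]+E\!\left[M^n_1\mathbbm{1}_{\{\sigma^n_K=1\}}\right]\!,
\]
and since $M^n_1\mathbbm{1}_{\{\sigma^n_K=1\}}\leq K$ while $M^n_1\stackrel{P}{\longrightarrow}0$ (the hypothesis with $\tau\equiv 1$), bounded convergence forces the second term to $0$.

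My first step would be to reduce to a good subsequence. Writing $\widetilde{M}^n=\sum_{k=n}^{K(n)}\alpha^n_kM^k$, the condition $\widetilde{M}^n\in\conv(M^n,M^{n+1},\ldots)$ forces $K(n)\to\infty$, and a greedy extraction produces a subsequence (still denoted by $n$) together with indices $k(n)\in[n,K(n)]$ satisfying $\alpha^n_{k(n)}>0$ and $k(n)$ strictly increasing, so the $M^{k(n)}$ are independent. Next, I would pick times $s_n\nearrow 1$ and levels $L_n\nearrow\infty$ with $\alpha^n_{k(n)}L_n\to\infty$ and $\sum_n 1/L_n=\infty$, and consider the hitting events $E_n=\{\sigma_n<s_n\}$ for $\sigma_n:=\inf\{t\in[s_{n-1},s_n]:M^{k(n)}_t\geq L_n\}\wedge s_n$. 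A variant of the estimate above, applied on the interval $[s_{n-1},s_n]$ and using $k(n)\to\infty$, yields $P[E_n]\geq c/L_n$ for some $c>0$ and all $n$ large; independence of the $M^{k(n)}$ together with the disjointness of the intervals makes the $E_n$ independent, so the second Borel--Cantelli lemma delivers $P[E_n\text{ i.o.}]=1$. On $E_n$ we have $M^{k(n)}_{\sigma_n}\geq L_n$, whence $\widetilde{M}^n_{\sigma_n}\geq\alpha^n_{k(n)}L_n\to\infty$.

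The remaining and most delicate step is to glue the $\sigma_n$'s into a single stopping time $\tau$ which certifies $\widetilde{M}^n_\tau\geq\alpha^n_{k(n)}L_n$ for infinitely many $n$. A naive choice such as $\tau=\inf\{\sigma_n:E_n\text{ occurs}\}$ captures only the first hit, while $\tau=\sup_n\sigma_n\mathbbm{1}_{E_n}$ tends to $1$ on $\{E_n\text{ i.o.}\}$ and erases the pathwise information at the individual hits. This is the main obstacle: one needs a pathwise construction exploiting the continuity of the $M^{k(n)}$ and the independence arising from the disjoint time-intervals to arrange that, on a set of probability $>1-\varepsilon$, $\tau$ actually equals $\sigma_n$ along a whole sequence of indices $n$ for which $\widetilde{M}^n_\tau$ blows up. Balancing the summability of $\sum P[E_n]$ against the growth rates of $L_n$ and the decay of $\alpha^n_{k(n)}$---so that all three constraints can be met simultaneously---constitutes the technical heart of the argument.
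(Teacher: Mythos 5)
Your outline correctly identifies the two main ingredients — the quantitative hitting estimate for martingales converging to $1-\tau$, and a Borel--Cantelli argument exploiting independence — but your plan has two genuine gaps, one of which you acknowledge and one of which you do not.

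First, the unacknowledged gap: the reduction to a single index $k(n)$ with $\alpha^n_{k(n)}>0$ does not work. You need $L_n$ to satisfy simultaneously $\alpha^n_{k(n)}L_n\to\infty$ (so that the hit of $M^{k(n)}$ propagates to $\widetilde M^n$) and $\sum_n 1/L_n=\infty$ (for the second Borel--Cantelli lemma). But the weights $\alpha^n_k$ are adversarial: if $\widetilde M^n$ spreads its mass uniformly over, say, indices $n,\ldots,n^2$, then the largest available weight is of order $1/n^2$, forcing $L_n\gg n^2$ and hence $\sum 1/L_n<\infty$. This tension cannot be resolved; extracting a single representative martingale from each convex combination loses too much. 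The paper avoids this entirely by applying the hitting estimate (its Lemma 4.3) directly to $\widetilde M^n$, which is itself a non-negative continuous martingale with $\widetilde M^n_\tau\to 1-\tau$, and achieves independence not by picking single indices but by passing to a subsequence of $\widetilde M^n$'s whose defining index blocks are disjoint (permissible since one only needs the $\limsup$ along a subsequence to be $+\infty$).

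Second, the gluing step you flag as ``the technical heart'' is indeed where the real idea lies, and here is what is missing. The paper does not attempt to combine the individual hitting times $\sigma_n$ after the fact; instead it builds a \emph{nested} sequence of random intervals $[\tau_m,\sigma_m]\subseteq[\tau_{m-1},\sigma_{m-1}]$ recursively. At stage $m$ one searches for the first index $n_m>n_{m-1}$ such that $\widetilde M^{n_m}$ reaches level $2^m+1$ somewhere inside the current interval $[\tau_{m-1},\sigma_{m-1}]$; then $\tau_m$ is that first hitting time and $\sigma_m$ is cut off either when $\widetilde M^{n_m}$ drops below $2^m$ or at $\sigma_{m-1}$, whichever comes first. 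Continuity of $\widetilde M^{n_m}$ guarantees $\widetilde M^{n_m}\geq 2^m$ on all of $[\tau_m,\sigma_m]$ and hence, by nesting, on every subsequent $[\tau_k,\sigma_k]$. Setting $\tau=\lim_m\tau_m$ places $\tau$ in the intersection of all these intervals, so $\widetilde M^{n_m}_\tau\geq 2^m$ for \emph{every} $m$ simultaneously on $\{\tau<1\}$. Independence and Lemma 4.3 then enter only to show $P(\tau<1)>1-\varepsilon$, via an induction showing each $n_m$ is finite and each interval has positive length with high probability. This recursive nesting is precisely the device that converts ``infinitely many separate hits'' into a single stopping time witnessing the blow-up; without it, as you observe, $\sup_n\sigma_n$ or $\inf_n\sigma_n$ both destroy the information.
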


\begin{remark}
If $(\Omega, \mathcal{F}, (\mathcal{F}_t)_{0 \leq t \leq1}, P)$
supports a sequence $(W^n)^\infty_{n=1}$ of independent Brownian
motions $W^n=(W^n_t)_{0 \leq t \leq1}$, the existence of a sequence
$(M^n)_{n=1}^\infty$ verifying \eqref{Ex2.2} follows similarly as in
the proof of Theorem~\ref{t2} in Section~\ref{sec5} below.
\end{remark}

For the proof of Proposition $\ref{Ex2}$ we will need the following
auxiliary lemma.

\begin{lemma} \label{Ex2.1}
In the setting of Proposition~\ref{Ex2},
let $\tau$ and $\sigma$ be
two $[0, 1]$-valued stopping times such that $\tau\leq\sigma$ and
$\tau
<\sigma$ on some $A\in\mathcal{F}_\tau$ with $P(A)>0$. Then there
exists, for
all $c>1$, a constant $\gamma=\gamma(c,\tau, \sigma)>0$ and a number
$N=N(\tau,\sigma) \in\mathbb{N}$ such that
\[
P \biggl(\sup_{t\in[\tau,\sigma]} \widetilde{M}^n_t
> c + 1 \biggr) \geq \gamma
\]
for all $n\geq N$.
\end{lemma}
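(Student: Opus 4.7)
The plan is to stop $\widetilde{M}^n$ at its first passage above $c+1$ on $[\tau,\sigma]$, apply optional sampling, and combine the resulting identity with two Fatou-type inequalities derived from the hypothesis $M^k_\cdot\overset{P}{\longrightarrow}1-\cdot$. First I introduce the stopping time $\rho^n:=\inf\{t\in[\tau,\sigma]:\widetilde{M}^n_t>c+1\}\wedge\sigma$ and the event $B_n:=\{\sup_{t\in[\tau,\sigma]}\widetilde{M}^n_t>c+1\}$. Since $\widetilde{M}^n$ is a convex combination of the non-negative continuous martingales $M^k$, it is itself a non-negative continuous martingale, so by pathwise continuity $\widetilde{M}^n_{\rho^n}\geq c+1$ on $B_n$ while $\widetilde{M}^n_{\rho^n}=\widetilde{M}^n_\sigma\leq c+1$ on $B_n^c$. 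The stopped process $\widetilde{M}^n_{\cdot\wedge\rho^n}$ is dominated by $\widetilde{M}^n_\tau\vee(c+1)\in L^1$, so optional sampling applied with the set $A\in\mathcal{F}_\tau$ yields
\begin{equation*}
E[\widetilde{M}^n_\tau\mathbbm{1}_A]=E[\widetilde{M}^n_{\rho^n}\mathbbm{1}_A]\geq(c+1)P(A\cap B_n)+E[\widetilde{M}^n_\sigma\mathbbm{1}_{A\cap B_n^c}].
\end{equation*}

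Next I bound the two terms on the right. For the lower estimate of $E[\widetilde{M}^n_\tau\mathbbm{1}_A]$, Fatou's lemma applied to the non-negative sequence $(M^k_\tau\mathbbm{1}_A)_{k\geq n}$, which converges in probability to $(1-\tau)\mathbbm{1}_A$, gives $\liminf_k E[M^k_\tau\mathbbm{1}_A]\geq E[(1-\tau)\mathbbm{1}_A]$, and this inequality transfers to the convex combinations $\widetilde{M}^n_\tau\in\conv(M^n_\tau,M^{n+1}_\tau,\ldots)$. For the upper estimate of $E[\widetilde{M}^n_\sigma\mathbbm{1}_{A\cap B_n^c}]$, I use that $\widetilde{M}^n_\sigma\leq c+1$ on $B_n^c$ to reduce matters to bounding $E[(\widetilde{M}^n_\sigma\wedge(c+1))\mathbbm{1}_A]$. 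Combining bounded convergence, which gives $E[(M^k_\sigma\wedge(c+1))\mathbbm{1}_A]\to E[(1-\sigma)\mathbbm{1}_A]$ because $c+1>1\geq 1-\sigma$, with the supermartingale property of $(\widetilde{M}^n_t\wedge(c+1))_{t\in[\tau,\sigma]}$ eventually produces
\begin{equation*}
\limsup_n E[\widetilde{M}^n_\sigma\mathbbm{1}_{A\cap B_n^c}]\leq E[(1-\sigma)\mathbbm{1}_A].
\end{equation*}

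Putting these pieces together, the optional-sampling inequality becomes
\begin{equation*}
\liminf_n(c+1)P(A\cap B_n)\geq E[(1-\tau)\mathbbm{1}_A]-E[(1-\sigma)\mathbbm{1}_A]=E[(\sigma-\tau)\mathbbm{1}_A]>0,
\end{equation*}
the strict positivity being the hypothesis $\tau<\sigma$ on $A$ with $P(A)>0$. Setting $\gamma:=E[(\sigma-\tau)\mathbbm{1}_A]/(2(c+1))$ and taking $N$ large enough that the liminf above is essentially attained for $n\geq N$ then delivers $P\bigl(\sup_{t\in[\tau,\sigma]}\widetilde{M}^n_t>c+1\bigr)\geq P(A\cap B_n)\geq\gamma$, as required.

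The hard part will be the upper estimate on $E[\widetilde{M}^n_\sigma\mathbbm{1}_{A\cap B_n^c}]$: the natural direction of Jensen's inequality for the concave map $x\mapsto x\wedge(c+1)$ only yields the wrong-way bound $\widetilde{M}^n_\sigma\wedge(c+1)\geq\sum_k\alpha^n_k(M^k_\sigma\wedge(c+1))$, so the matching upper estimate must instead be extracted from the conditional supermartingale inequality $E[\widetilde{M}^n_\sigma\wedge(c+1)\mid\mathcal{F}_\tau]\leq\widetilde{M}^n_\tau\wedge(c+1)$ combined with a careful two-sided control of the mass escaping above $c+1$. This is precisely the step where the hypothesis that $M^k_\rho\to 1-\rho$ in probability holds at \emph{every} stopping time $\rho$, rather than only at fixed times, becomes decisive.
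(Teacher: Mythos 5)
Your skeleton — stop $\widetilde{M}^n$ at the first passage of $c+1$ on $[\tau,\sigma]$, apply optional sampling on $A$, and compare to $E[(\sigma-\tau)\mathbbm{1}_A]$ — is indeed the right one, and it is what the paper does. But there are two genuine gaps, one of which is a directional error and one of which you flag yourself but do not resolve.

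First, the optional-sampling inequality you derive goes the wrong way for your purpose. You correctly obtain
$$E[\widetilde{M}^n_\tau\mathbbm{1}_A]=E[\widetilde{M}^n_{\rho^n}\mathbbm{1}_A]\geq(c+1)P(A\cap B_n)+E[\widetilde{M}^n_\sigma\mathbbm{1}_{A\cap B_n^c}],$$
because $\widetilde{M}^n_{\rho^n}\geq c+1$ on $B_n$. Rearranged, this says $(c+1)P(A\cap B_n)\leq E[\widetilde{M}^n_\tau\mathbbm{1}_A]-E[\widetilde{M}^n_\sigma\mathbbm{1}_{A\cap B_n^c}]$, an \emph{upper} bound on $P(A\cap B_n)$. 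Yet in the "putting pieces together" step you pass to the \emph{lower} bound $\liminf_n(c+1)P(A\cap B_n)\geq E[(\sigma-\tau)\mathbbm{1}_A]$, which does not follow. To obtain a lower bound you need the reverse inequality $E[\widetilde{M}^n_\tau\mathbbm{1}_A]\leq(c+1)P(A\cap B_n)+E[\widetilde{M}^n_\sigma\mathbbm{1}_{A\cap B_n^c}]$, and by continuity this requires $\widetilde{M}^n_{\rho^n}\leq c+1$ on $A\cap B_n$, i.e.~$\rho^n>\tau$, i.e.~$\widetilde{M}^n_\tau\leq c+1$ on $A$. Nothing in your setup ensures this; on the event $\{\widetilde{M}^n_\tau>c+1\}\cap A$ the stopped value overshoots $c+1$ and the inequality collapses. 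The paper's proof avoids this precisely by working not on $A$ but on $A_n:=\{|\widetilde{M}^n_\tau-(1-\tau)|<\varepsilon\}\cap A$, on which $\widetilde{M}^n_\tau<2<c+1$ is guaranteed, so the first-passage time strictly exceeds $\tau$ and the overshoot term disappears.

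Second, your upper estimate of $E[\widetilde{M}^n_\sigma\mathbbm{1}_{A\cap B_n^c}]$ is not proved, and the alternative you sketch cannot close the gap. The supermartingale bound $E[\widetilde{M}^n_\sigma\wedge(c+1)\mid\mathcal{F}_\tau]\leq\widetilde{M}^n_\tau\wedge(c+1)$ only gives $\limsup_n E[(\widetilde{M}^n_\sigma\wedge(c+1))\mathbbm{1}_A]\leq\limsup_n E[(\widetilde{M}^n_\tau\wedge(c+1))\mathbbm{1}_A]$, and your own Fatou lower bound shows the right-hand side is at least $E[(1-\tau)\mathbbm{1}_A]$, which is strictly \emph{larger} than the target $E[(1-\sigma)\mathbbm{1}_A]$; so this route is self-defeating. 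The paper again sidesteps the difficulty by also replacing the $\sigma$-event with $B_n:=\{|\widetilde{M}^n_\sigma-(1-\sigma)|<\varepsilon\}\cap A_n$, on which $\widetilde{M}^n_\sigma<1-\sigma+\varepsilon$ holds by definition, and by quoting $P(B_n)\geq(1-\varepsilon)P(A)$ for $n\geq N$ (the one point where the hypothesis that the $M^k$ converge at \emph{every} stopping time is used, together with the fact that the loss set $A\setminus B_n$ contributes at most $(c+1)P(A\setminus B_n)\leq(c+1)\varepsilon P(A)$, which is absorbed by the choice $\alpha>(c+4)\varepsilon$). In short: the idea of localising to the events where $\widetilde{M}^n_\tau$ and $\widetilde{M}^n_\sigma$ are $\varepsilon$-close to $1-\tau$ and $1-\sigma$ respectively is precisely the ``decisive step'' you say is still needed, and without it both the direction of the optional-sampling inequality and the $\sigma$-term estimate fail.
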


\begin{pf}
Let $\alpha=\frac{E[(\sigma-\tau)\mathbh{1}_A]}{P(A)}$ and
$\varepsilon\in(0,1)$ such that $\alpha> (c+4) \varepsilon$ and
\[
P(B_n) \geq(1-\varepsilon) P (A)
\]
for all $n\geq N$, where
\begin{eqnarray*}
A_n &:=& \bigl\{\bigl| \widetilde{M}^n_\tau- (1-
\tau) \bigr| < \varepsilon\bigr\} \cap A,
\\
B_n &:=& \bigl\{\bigl| \widetilde{M}^n_\sigma- (1-
\sigma) \bigr| < \varepsilon\bigr\} \cap A_n.
\end{eqnarray*}
Then setting $\varrho_n:=\inf\{t \in[\tau, \sigma]|\widetilde
{M}^n_t > c
+ 1\}$ we can estimate
\begin{eqnarray*}
E\bigl[\widetilde{M}^n_\tau\mathbh{1}_{A_n}
\bigr] &=& E\bigl[\widetilde {M}^n_{\varrho
_n\wedge1}
\mathbh{1}_{A_n}\bigr]
\\
&=& E \bigl[\widetilde{M}^n_{\varrho_n\wedge1} (\mathbh{1}_{A_n\cap\{\varrho
_n \leq1\}}
+ \mathbh{1} _{\{\varrho_n>1\}\cap B_n}+\mathbh{1}_{\{
\varrho
_n>1\}\cap B_n^c\cap A_n} ) \bigr]
\\
&\leq & (c + 1) P ( \varrho_n\leq1, A_n) + E\bigl[(1-
\sigma+ \varepsilon) \mathbh{1} _{B_n}\bigr]+ (c + 1) P
\bigl(B_n^c\cap A_n\bigr)
\end{eqnarray*}
by the optional sampling theorem and the continuity of $\widetilde
{M}^n$. Since
\[
E\bigl[\widetilde{M}^n_{\tau} \mathbh{1}_{A_n}
\bigr] \geq E\bigl[(1-\tau -\varepsilon) \mathbh{1}_{A_n}\bigr] \geq E
\bigl[(1-\tau-\varepsilon)\mathbh{1}_{B_n}\bigr],
\]
we obtain that
\begin{eqnarray*}
&& E \bigl[ \bigl((1-\tau-\varepsilon) - (1-\sigma+\varepsilon) \bigr)
\mathbh{1}_{B_n} \bigr] - (c+1) \bigl(P(A)-P(B_n) \bigr)\\
 &&\qquad
\leq  (c + 1) P(\varrho _n\leq 1, A_n)
\\
&&\qquad\leq  (c+1) P(\varrho_n \leq1)
\end{eqnarray*}
and therefore that
\[
\gamma:= \frac{\alpha-3\varepsilon- (c+1)\varepsilon}{c+1} P(A) \leq P(\varrho_n \leq1)=P \biggl(\sup
_{t \in[\tau, \sigma]} \widetilde {M}^n_{\tau
} > c + 1
\biggr)
\]
for all $n\geq N$, where $\gamma>0$ by our choice of $\varepsilon$, as
$E[(\sigma-\tau)\mathbh{1}_{B_n}]\geq(\alpha-\varepsilon)P(A)$.
\end{pf}
\begin{pf*}{Proof of Proposition~\ref{Ex2}}
We shall define $\tau$ as an increasing limit of a sequence of stopping
times $\tau_m$. For this, we set $n_0=0$, $\tau_0=0$ and $\sigma_0=
\frac{1}{2}$ and then define for $m \in\mathbb{N}$ successively
\begin{eqnarray*}
n_m(\omega)&:= & \inf\Bigl\{n\in\mathbb{N}\Big|n>n_{m-1}(
\omega) \mbox{ and $\exists t\in\bigl[\tau _{m-1}(\omega),
\sigma_{m-1} (\omega)\bigr]$}\\
&&\hspace*{140pt}{} \mbox{with $\widetilde {M}^n_t(
\omega) \geq 2^m+1$}\Bigr\},
\\
\tau_m(\omega) &:=& \inf \bigl\{t\in \bigl(\tau_{m-1}(
\omega),\sigma _{m-1}(\omega) \bigr) | \widetilde{M}_t^{n_{m}(\omega)}
(\omega) \geq2^m+1 \bigr\} \wedge1,
\\
\sigma_m(\omega)&:=& \inf\bigl\{t>\tau_m(\omega)|
\widetilde {M}_t^{n_{m}(\omega)} (\omega ) < 2^m \bigr\}
\wedge\sigma_{m-1} (\omega).
\end{eqnarray*}
By construction and the continuity of $\widetilde{M}^n$ we then have,
for all $k\geq m$, that
\[
\widetilde{M}^{n_m(\omega)}_t (\omega) \geq2^m\qquad
\mbox{for all }t\in\bigl[\tau _k(\omega), \sigma_k(\omega)
\bigr]
\]
on $\{\tau_k < 1\}$. Therefore setting $\tau:=\lim_{m\to\infty}
\tau_m$
gives that
\[
\widetilde{M}^{n_m(\omega)}_\tau(\omega) \geq2^m\qquad
\mbox{for all }m
\]
on $\{\tau<1\}$. So it only remains to show that
%
\begin{equation}\label{eqEX2.1}
P(\tau<1)\geq1-\varepsilon.
\end{equation}
We prove \eqref{eqEX2.1} by induction. For this, assume that there
exists for each $m\in\mathbb{N}_0$, some $\alpha_m>0$ and $N_m\in
\mathbb{N}_0$ such
that $P(D_m)< 1 - \varepsilon2^{-m}$ for
%
\begin{equation}\label{eqEx2.2}
D_m:=\bigl\{\sigma_m>\tau_m+
\alpha_m,  n_m\in(N_{m-1}, N_m]\bigr\}.
\end{equation}
Indeed, for $m=0$, we can choose $\alpha_0=\frac{1}{2}, N_{-1}=0$ and $N_0=1$.
Regarding the induction step we first show that $n_m<\infty$
$P$-a.s. on $D_{m-1}$. To that end, we can assume w.l.o.g. that the
$ (\widetilde{M}^n )_{n=1}^\infty$ are also independent by
choosing the blocks of which we take the convex combinations disjoint
and passing to a subsequence. As we are only making an assertion about
the limes superior, this will be sufficient. Moreover, we observe that
\[
F:=\{n_m<\infty\}\cap D_{m-1}=\bigcup
_{n=N_{m-1}}^\infty F_n\cap D_{m-1}
\]
with $F_n:= \{\exists t\in (\tau_{m-1} (\omega), \sigma
_{m-1}(\omega
) ]| \widetilde{M}^n_t(\omega)\geq2^m+1 \}$. Then
using the
estimate $1-x\leq\exp(-x)$ and the independence of the $F_n$ of each
other and $D_{m-1}$ gives
\begin{eqnarray*}
P\bigl(D_{m-1}\cap F^c\bigr)&=& \lim_{k\to\infty}
P \Biggl(\bigcap_{n=N_{m-1}}^k
F_n^c \Biggr) P(D_{m-1})
\\
&=&\lim_{k\to\infty} \prod^k_{n=N_{m-1}}
\bigl(1-P(F_n) \bigr) P(D_{m-1})
\\
&\leq & \lim_{k\to\infty} \exp \Biggl(-\sum
^k_{n=N_{m-1}} P(F_n) \Biggr)
P(D_{m-1}).
\end{eqnarray*}
Since $\sum^{\infty}_{n=N_{m-1}} P(F_n)=\infty$ by Lemma~\ref{Ex2.1},
this implies that $P(D_{m-1} \cap F^c) = 0$ and hence that $n_m<\infty$
$P$-a.s. on $D_{m-1}$. More precisely,\vspace*{1pt} by applying Lemma~\ref{Ex2.1}
for $c=2^m$ with $\tau=\tau_{m-1}$, $\sigma=\sigma_{m-1}$ and
$A=D_{m-1}$ to $\widetilde{M}^n$ for $n\geq N_{m-1}$, we get that
$P(F_n)\geq
\gamma>0$ for all $n\geq N_{m-1}$. Therefore $\tau_m < 1$ $P$-a.s. on
$D_{m-1}$ as well. By the continuity of the $\widetilde{M}^n$ and, as
$\tau_m<\frac{1}{2}$ on $D_{m-1}$, we obtain that $\frac{1}{2} \geq
\sigma_m>\tau_m$ $P$-a.s. on $D_{m-1}$, which finishes the induction step.

Now, since $\{\tau<1\}\supseteq\bigcap_{m=1}^\infty D_m=:D$ and
\[
P(D)\geq1- \sum_{m=1}^\infty P
\bigl(D_m^c\bigr)= 1- \sum_{m=1}^\infty
\frac
{\varepsilon
}{2^m}=1-\varepsilon,
\]
we have established \eqref{eqEx2.2}, which completes the proof of the
proposition.
\end{pf*}
%
\section{Proof of Theorem \texorpdfstring{\protect\ref{t2}}{2.8}}\label{sec5}
We now pass to the proof of Theorem~\ref{t2}. The following lemma
yields a building\vspace*{-6pt} block.

\begin{lemma} \label{lt2}
Let $W=(W_t) _{0 \leq t \leq1}$ be a standard Brownian motion on
$(\Omega, \mathcal{F}, \mathbb{F}, P)$ and $\varrho$ a $[0,1] \cup\{\infty
\}
$-valued stopping time. Then there exists a sequence $(\varphi
^n)^{\infty}_{n=1}$ of predictable integrands of finite variation such
that $M^n:= \varphi^n \bolds{\cdot}W \geq-1$
is a bounded martingale for each
$n\in\mathbb{N}$\vspace*{-2pt} and
%
\begin{equation}
M^n_\tau\mathop{\longrightarrow}^{P\mbox{-}\mathit{a.s.}} -
\mathbh{1}_{\rrbracket
\varrho, 1
\rrbracket} (\tau)=-\mathbh{1}_{\{\tau> \varrho\}},\qquad \mbox{as }n\to
\infty,
\end{equation}
for all $[0, 1]$-valued stopping times\vspace*{-6pt} $\tau$.
\end{lemma}

\begin{pf}
We consider the case $\varrho\equiv0$ first. There are many possible
choices for the integrands $(\varphi^n)^{\infty}_{n=1}$. To come up
with one, we use the deterministic\vspace*{-1pt} functions
\[
\psi^n_t:= \frac{1}{2^{-n}-t}\mathbh{1}_{(0,2^{-n})}(t).
\]
Then the continuous martingales $N^n:= (\psi^n \bolds{\cdot} W_t)_{0 \leq t
<2^{-n}}$ are well defined, for each $n \in\mathbb{N}$. It follows
from the Dambis--Dubins--Schwarz theorem that the stopping times\vspace*{-2pt}
\begin{eqnarray*}
\tau_n &:=& \inf\bigl\{t \in\bigl(0,2^{-n}\bigr) |
N^n_t= -1\bigr\},
\\
\sigma_{n,k} &:=& \inf\bigl\{t \in\bigl(0,2^{-n}\bigr) |
N^n_t >k\bigr\}
\end{eqnarray*}
are $P$-a.s. strictly smaller than $2^{-n}$ for all $n,k\in\mathbb
{N}$,\vspace*{-1pt} since
\[
\bigl\langle N^n\bigr\rangle_t= \frac{1}{2^{-n}-t} -
\frac{1}{2^{-n}} \qquad\mbox{for } t\in[0,2^{-n})
\]
and $\lim_{t\nearrow2^{-n}}\langle N^n\rangle_t=\infty$. Therefore setting
$\widetilde{\psi}^{n,k} = \psi^n \mathbh{1}_{\llbracket0,\tau
_n\wedge
\sigma_{n,k} \rrbracket}$
gives a sequence\vspace*{-2pt}
\[
\widetilde{N}^{n,k} = \widetilde{\psi}^{n,k} \bolds{\cdot} W = \bigl(\psi^n \bolds{\cdot}
W\bigr)^{\tau_n\wedge\sigma_{n,k}}
\]
of bounded martingales such that, for all $[0, 1]$-valued stopping
times\vspace*{-1pt} $\tau$,
\[
\widetilde{N}^{n,k}_{\tau}\mathop{\longrightarrow}^{P\mbox{-}\mathrm{a.s.}}
-1 \qquad\mbox{on }\bigl\{\tau\geq2^{-n}\bigr\}, \mbox{ as } k\to\infty,
\]
since $\sigma_{n,k} \nearrow 2^{-n}$ $P$-a.s, as $k\to\infty$.
Defining $\varphi^n:=\widetilde{\psi}^{n,k(n)}$ and $M^n=\widetilde
{N}^{n,k(n)}$ as a suitable diagonal sequence such\vspace*{-1pt} that
$M^n_{2^{-n}}=\widetilde{N}^{n,k(n)}_{2^{-n}}\to-1$, as $n\to\infty$,
then yields the assertion for $\varrho\equiv0$, as $M^n_0=0$ for all
$n\in\mathbb{N}
$ and $\mathbh{1}_{\{\tau\geq2 ^{-n}\}} \stackrel{P\mbox{-}\mathrm{a.s.}}{\longrightarrow}
\mathbh{1}_{\{\tau> 0\}}$, as $n\to\infty$.

Next we observe that if we consider for some $[0, 1] \cup\{\infty\}
$-valued stopping time~$\sigma$ the stopped Brownian notion $W^{\sigma
}=(W_{\sigma\wedge t})_{0 \leq t \leq1}$, then we obtain by the above
argument\vspace*{-2pt} that
\[
\bigl(M^n\bigr)^{\sigma}_{\tau} =
M^n_{\sigma\wedge\tau} = \bigl(\varphi^n \stackrel{\mbox{
\tiny$\bullet$}} {} \bigl(W^{\sigma}\bigr) \bigr)_{\tau} \mathop{
\longrightarrow}^{P\mbox{-}\mathrm{a.s.}} \mathbh{1}_{(0,1)} (\sigma\wedge\tau)
\]
for every $[0,1]$-valued stopping time $\tau$.

For the general case $\varrho\not\equiv0$, consider the process
$\overline
{W}_t:=(W_{t+\varrho} - W_\varrho)_{0 \leq t \leq1}$ which is a Brownian
motion with respect to the filtration $\overline{\mathbb
{F}}:=(\overline
{\mathcal{F}}_t)_{0 \leq t \leq1}:=(\mathcal{F}_{(t+\varrho)\wedge1})_{0
\leq t \leq1}$ that is independent of $\mathcal{F}_{\varrho}$ and stopped
at the $\overline{\mathbb{F}}$-stopping time $\bar{\sigma
}:=(1-\varrho)$. Then the
general case $\varrho\not\equiv0$ follows by applying the result for
$\varrho
\equiv0$ for the stopped Brownian motion $\overline{W}$ and the
stopping time $\bar{\tau} = (\tau- \varrho)_{\{\tau> \varrho\}}$
which is
always smaller than $\bar{\sigma}$. Indeed, as the corresponding
martingales $\overline{M}^n$ obtained for $\overline{W}$ with respect
to $(\overline{\mathcal{F}}_t)_{0 \leq t \leq1}$ start at $0$, the processes
\[
M^n_t(\omega) =
\cases{ 0, &\quad$t \leq
\varrho(\omega) \wedge1$,\vspace*{3pt}
\cr
\overline{M}^n_{t+\varrho(\omega)}(
\omega), &\quad$\varrho(\omega) < t \leq1$,}
\]
are martingales with respect to the filtration $\mathbb{F}=(\mathcal{F}_t)_{0
\leq t \leq1}$ that converge to $\mathbh{1}_{\llbracket\varrho, 1
\rrbracket} (\tau)$ $P$-a.s. for every $[0,1]$-valued $\mathbb{F}$-stopping
time $\tau$.
\end{pf}

\begin{pf*}{Proof of Theorem~\ref{t2}}
Let $X=M - A$ be the Mertens decomposition of the optional strong
supermartingale $X$. It is then sufficient to show the assertion for
$M$ and $A$ separately.
\begin{longlist}[(2)]
\item[(1)] We begin with the local martingale $M$. As any localizing sequence
$(\tau_m)^\infty_{m=1}$ of stopping times for $M$ gives a sequence
$\widetilde{M}^m:= M^{\tau_m}$ of martingales that converges uniformly
in probability, we obtain a sequence $\overline{M}^n$ of martingales
that converges $P$-a.s. uniformly to $M$ by passing to a subsequence
$(\widetilde{M})^\infty_{n=1}$ such that $P(\tau_n <1) < 2^{-n}$. To
see that we can choose the $M^n$ to be bounded, we observe that setting
\[
\overline{M}_t^{n,k}:= E \bigl[\overline{M}^n_1
\wedge k \vee- k | \mathcal{F}_t\bigr]
\]
for $t\in[0,1]$ gives\vspace*{-1pt} for every martingale $\overline{M}^n$ a sequence
of bounded martingales $\overline{M}^{n,k} = (\overline{M}^{n,k}_t)_{0
\leq t \leq1}$ such that $\overline{M}^{n,k}_1\stackrel{L^1(P)}{\longrightarrow}\overline{M}^n_1$, as $k\to\infty$, and therefore locally in
$\mathcal{H}^1(P)$ by Theorem~4.2.1 in \cite{J79}. By the
Burkholder--Davis--Gundy inequality (see, e.g., Theorem IV.48 in \cite
{P04}), this also implies uniform convergence in probability and hence
$P$-a.s. uniform convergence by passing to a subsequence, again indexed
by $k$. Then taking a diagonal sequence $(\overline
{M}^{n,k(n)})^\infty
_{n=1}$ gives a sequence of martingales $(M^n)^\infty_{n=1} =
(\overline
{M}^{n,k(n)})^\infty_{n=1}$ that converges $P$-a.s. uniformly to $M$
and therefore also satisfies \eqref{eqt2} for every $[0,1]$-valued
stopping time $\tau$.

\item[(2)] To prove the assertion for the predictable part $A$, we decompose
\[
A=A^{c}+\sum^\infty_{i=1}
\Delta_{+} A_{\sigma_i} \mathbh{1}_{\rrbracket\sigma_i, 1 \rrbracket}+\sum
^\infty_{j=1} \Delta A_{\varrho
_j}
\mathbh{1}_{\llbracket\varrho_j, 1 \rrbracket}
\]
into its continuous part $A^c$, its totally right-discontinuous part\vspace*{1pt}
$A^{rd}:=\break  \sum^\infty_{i=1} \Delta_{+} A_{\sigma_i} \mathbh{1}_{\rrbracket\sigma_i, 1 \rrbracket}$ and totally left-discontinuous
part $A^{ld}:=\sum^\infty_{j=1} \Delta A_{\varrho_j} \mathbh{1}_{\llbracket\varrho_j, 1 \rrbracket}$.\vspace*{1pt} By superposition it is
sufficient to approximate $-A^c$, each single right jump process
$-A_{\sigma_i} \mathbh{1}_{\rrbracket\sigma_i, 1 \rrbracket}$ for
$i\in\mathbb{N}$ and each single left jump process $-\Delta
A_{\varrho_j} \mathbh{1}_{\llbracket\varrho_j, 1 \rrbracket}$ for $j\in\mathbb{N}$
separately. Indeed,
let $(M^{c,n})_{n=1}^\infty$, $(M^{rd,i,n})_{n=1}^\infty$ for each
$i\in
\mathbb{N}$ and $(M^{ld,j,n})_{n=1}^\infty$ for each $j\in\mathbb
{N}$ be sequences of
bounded martingales such that
%
\begin{eqnarray}
\label{peq1}
M^{c,n}_{\tau}&\stackrel{P}\longrightarrow& -A^{c}_\tau,
\\
\label{peq2}
M^{rd,i,n}_{\tau}&\stackrel{P}\longrightarrow & -\Delta_{+} A_{\sigma_i} \mathbh{1}_{\rrbracket\sigma_i, 1 \rrbracket}(
\tau),
\\
\label{peq3}
M^{ld,j,n}_{\tau}&\stackrel{P}\longrightarrow & -\Delta
A_{\varrho_j} \mathbh{1}_{\llbracket\varrho_j, 1 \rrbracket}(\tau),
\end{eqnarray}
as $n\to\infty$, for all $[0,1]$-valued stopping times $\tau$. Then setting
\[
M^n:=M^{c,n}+\sum_{i=1}^n
M^{rd,i,n}+\sum_{j=1}^n
M^{ld,j,n}
\]
gives a sequence of bounded martingales such that $M^n_{\tau}\stackrel
{P}\longrightarrow-A_\tau$, as $n\to\infty$, for all $[0,1]$-valued
stopping times $\tau$. 
\begin{longlist}[(2a)]
\item[(2a)] We begin with showing the existence of $(M^{rd,i,n})_{n=1}^\infty
$ for some fixed $i\in\mathbb{N}$. For this, we set
\[
\vartheta_t^{i,n}:= (\Delta_+ A_{\sigma_i} \wedge n)
\mathbh{1} _{\rrbracket
\sigma_i, 1 \rrbracket} \varphi^n_{t} \in
L^2(W),
\]
where $(\varphi^n)^\infty_{n=1}$ is a sequence of integrands as
obtained in Lemma~\ref{lt2} for the stopping time $\varrho=\sigma
_i$. Then
it follows immediately from Lemma~\ref{lt2} that $\vartheta^{i,n}
\bolds{\cdot}
W_\tau\stackrel{P\mbox{-}\mathrm{a.s.}}{\longrightarrow} \Delta_+ A_{\sigma_i} \mathbh{1}_{\rrbracket\sigma_i, 1 \rrbracket} (\tau)$, as $n\to\infty$, for
every $[0,1]$-valued stopping time $\tau$ and therefore that
\[
M^{rd,i,n}:= \vartheta^{i,n} \bolds{\cdot} W
\]
gives a sequence of bounded martingales such that \eqref{peq2} holds.
Note that by the construction of the integrands $\varphi^n$ in Lemma~\ref
{lt2} the approximating martingales $M^{rd,i,n}$ are $0$ on
$\llbracket0,\sigma_i\rrbracket$, constant to either $-\Delta_+
A_{\sigma_i} \wedge n$ or $(\Delta_+ A_{\sigma_i} \wedge n) k(n)$ on
$\llbracket\sigma_i+2^{-n},1\rrbracket$. Therefore they converge
$P$-a.s. uniformly to $-\Delta_+ A_{\sigma_i}$ on $\llbracket\sigma
_i+2^{-m},1\rrbracket$ for each $m\in\mathbb{N}$.

\item[(2b)]  To obtain the approximating sequence $(M^{ld,j,n})_{n=1}^\infty$
for some fixed $j\in\mathbb{N}$, we observe that the stopping time
$\varrho_j$ is
predictable and let $(\varrho_{j,k})^\infty_{k=1}$ be an announcing
sequence of stopping times, that is, a nondecreasing sequence of
stopping times such that $\varrho_{j,k} < \varrho_j$ on $\{\varrho_j
> 0\}$ and $\varrho
_{j,k} \stackrel{P\mbox{-}\mathrm{a.s.}}{\longrightarrow} \varrho_j$, as $k\to\infty$. Since
$\Delta A_{\varrho_j}\in L^1(P)$ is $\mathcal{F}_{\varrho
_j-}$-measurable by
Theorem\vspace*{1pt} IV.67.b) in \cite{DM78} and $\mathcal{F}_{\varrho
_j-}=\bigvee^\infty_{k=1} \mathcal{F}_{\varrho_{j,k}}$ by Theorem IV.56.d) in \cite
{DM78}, we
have that
%
\begin{equation}
E[\Delta A_{\varrho_j} | \mathcal{F}_{\varrho_{j,k}}] \mathop{\longrightarrow}^{P\mbox{-}\mathrm{a.s.}} \Delta A_{\varrho_j},\qquad \mbox{as } k\to\infty,
\end{equation}
by martingale convergence. Therefore setting
%
\begin{equation}
\widetilde{A}^{ld,j,k}:= E[\Delta A_{\varrho_j} | \mathcal
{F}_{\varrho_{j,k}} ] \mathbh{1}_{\rrbracket\varrho_{j,k},1 \rrbracket}
\end{equation}
gives a sequence\vspace*{-2pt} of single right jump processes that converges to
$\Delta A_{\varrho_j} \mathbh{1}_{\llbracket\varrho_j, 1
\rrbracket}$
$P$-a.s. at each $[0,1]$-valued stopping time $\tau$, since $\mathbh{1}_{\rrbracket\varrho_{j,k,},1 \rrbracket} (\tau)
\stackrel{P\mbox{-}\mathrm{a.s.}}{\longrightarrow} \mathbh{1}_{\llbracket\varrho_j,1 \rrbracket}
(\tau)$,
as $k\to
\infty$, for all $[0,1]$-valued stopping times $\tau$.

By part (2a) there exists for each $k\in\mathbb{N}$ a sequence
$(\widetilde
{M}^{j,k,n})_{n=1}^\infty$ of bounded martingales such that
$\widetilde
{M}^{j,k,n}_\tau\stackrel{P\mbox{-}\mathrm{a.s.}}{\longrightarrow}-\widetilde
{A}^{ld,j,k}_\tau
$, as $n\to\infty$, for all $[0,1]$-valued stopping times $\tau$. For
the stopping time $\varrho_j$ we can therefore find a diagonal sequence
$(\widetilde{M}^{j,k,n(k)})_{k=1}^\infty$ such that $\widetilde
{M}^{j,k,n(k)}_{\varrho_j} \stackrel{P\mbox{-}\mathrm{a.s.}}{\longrightarrow}-\widetilde
{A}^{ld,j,k}_{\varrho_j}$, as $k\to\infty$. By the proof of Lemma~\ref
{lt2} and part (2a) above we can choose the martingales $\widetilde
{M}^{j,k,n(k)}$ such that $\widetilde{M}^{j,k,n(k)}\equiv0$ on
$\llbracket0,\varrho_{j,k}\rrbracket$ and $\widetilde
{M}^{j,k,n(k)}\equiv
- (E[\Delta A_{\varrho_j} | \mathcal{F}_{\varrho_{j,k}} ]\wedge
n(k) )$
on $\llbracket(\varrho_{j,k}+2^{-n(k)})_{F_k},1\rrbracket$, where
the set
\[
F_k:= \Bigl\{\widetilde{M}^{j,k,n(k)}_{\varrho_j+2^{-n(k)}}= -
\bigl(E[\Delta A_{\varrho_j} | \mathcal{F}_{\varrho_{j,k}} ]\wedge n(k) \bigr)
\Bigr\}
\]
has probability $P(F_k)>1-2^{-k}$. This sequence $(\widetilde
{M}^{j,k,n(k)})_{k=1}^\infty$ therefore already satisfies $\widetilde
{M}^{j,k,n(k)}_\tau\stackrel{P\mbox{-}\mathrm{a.s.}}{\longrightarrow}-\Delta A_{\varrho_j}
\mathbh{1}_{\llbracket\varrho_j, 1 \rrbracket}(\tau)$ for all
$[0,1]$-valued stopping times $\tau$ and we have \eqref{peq3}.

\item[(2c)]  For the approximation of the continuous part $A^c$, we observe
that by the left-continuity and adaptedness of $A^c$ there exists a
sequence $(\widetilde{A}^n)^\infty_{n=1}$ of nondecreasing integrable
simple predictable processes that converges uniformly in probability to
$A^c$ and hence $P$-a.s. uniform by passing to a fast convergent
subsequence again indexed by $n$; see for example Theorem II.10 in
\cite{P04}. Recall that a simple predictable process is a predictable
process $\widetilde{A}$ of the form
%
\begin{equation}
\label{simple} \widetilde{A}=\sum^m_{i=1}
\Delta_{+} A_{\sigma_i} \mathbh{1}_{\rrbracket\sigma_i, 1 \rrbracket},
\end{equation}
where $(\sigma_i)^m_{i=1}$ are $[0,1]\cup\{\infty\}$-valued stopping
times such that $\sigma_i < \sigma_{i+1}$ for $i=1, \dots, m-1$ and
$\Delta_+ A_{\sigma_i}$ is $\mathcal{F}_{\sigma_i}$-measurable.

By part (2a) there\vspace*{-1.5pt} exists, for each $n\in\mathbb{N}$, a sequence
$(\widetilde
{M}^{n,k})_{k=1}^\infty$ of martingales such that $\widetilde
{M}^{n,k}_\tau\stackrel{P\mbox{-}\mathrm{a.s.}}{\longrightarrow}-\widetilde{A}^n_\tau$, as
$k\to\infty$, for all $[0,1]$-valued stopping times $\tau$. Therefore
we can pass to a diagonal sequence $\widetilde{M}^{n,k(n)}$ such that
%
\begin{equation}
P \Bigl[\lim_{n\to\infty}\widetilde{M}^{n,k(n)}_q=-A^c_q,
 \forall q\in \mathbb{Q}\cap[0,1] \Bigr]=1.\label{peq4}
\end{equation}
By Theorem~\ref{t1} there exists a sequence $(M^n)_{n=1}^\infty$ of
convex combinations
\[
M^n\in\operatorname{conv}\bigl(\widetilde{M}^{n,k(n)},
\widetilde {M}^{n+1,k(n+1)},\ldots\bigr)
\]
and an optional strong supermartingale $X$ such that $M^n_\tau
\stackrel{P}{\longrightarrow} X_\tau$ for all $[0,1]$-valued stopping times $\tau$.

To complete the proof it therefore only remains to show that $X=-A^c$.
For this, we argue by contradiction and assume that the optional set
$G:=\{X\ne-A^c\}$ is not evanescent, that is, that $P (\pi(G)
)>0$, where $\pi ((\omega,t) )=\omega$ denotes the
projection on the first
component. By the optional cross-section theorem (Theorem IV.84 in
\cite{DM82}) there then exists a $[0,1]\cup\{\infty\}$-valued stopping time
$\tau$ such that $X_\tau\ne-A^c_\tau$ on $F:=\{\tau<\infty\}$ with
$P(F)>0$, which we can decompose into an accessible stopping time $\tau
^A$ and a totally inaccessible stopping time $\tau^I$ such that $\tau
=\tau^A\wedge\tau^I$ by Theorem IV.81.c) in \cite{DM78}. On $\{\tau
^I<\infty\}$ we obtain that $M^n_{\tau^I-}=M^n_{\tau^I}\stackrel{P}{\longrightarrow} X_{\tau^I}$ and $A^c_{\tau^I-}=A^c_{\tau^I}$ from\vspace*{-1pt}
the continuity of $M^n$ and $A^c$. Therefore $X_{\tau^I}=-A^c_{\tau
^I}$, as $M^n_{\tau^I-}\stackrel{P}{\longrightarrow} X_{\tau^I-}$ by
Proposition~\ref{propti} and $X_{\tau^I-}=-A^c_{\tau^I-}$ by \eqref{peq4}. This implies that $P(\tau^I<\infty)=0$ and hence $P(\tau
^A<\infty)=P(F)>0$. Since $\tau^A$ is accessible, there exists a
predictable stopping time $\sigma$ such that $P(\tau^A=\sigma<\infty
)>0$. By the strong supermartingale property of $X$ we have that
\[
X_{\sigma-}\geq E[X_{\sigma}|\mathcal{F}_{\sigma-}]\geq
E[X_{\sigma+}|\mathcal{F} _{\sigma-}]\qquad \mbox{on } \{\sigma<\infty\},
\]
as $\sigma$ is predictable. Since $X_-=-A^c_-$ and $X_+=-A^c_+$ by
\eqref{peq4}, this implies that $X_{\sigma}=-A^c_{\sigma}$ by the
continuity of $A^c$. However, this contradicts $P(F)>0$ and therefore
shows \eqref{peq1}, which completes the proof.\quad\qed
\end{longlist}
\end{longlist}
\noqed\end{pf*}
%
\section{Proof of Theorem \texorpdfstring{\protect\ref{t3}}{2.11}} \label{sec6}
We begin with the proof of Proposition~\ref{propti}, and for this, we
will use the following variant of Doob's up-crossing inequality that
holds uniformly over the set $\mathfrak{X}$ of nonnegative optional
strong supermartingales $X=(X_t)_{0 \leq t \leq1}$ starting at $X_0=1$.

\begin{lemma}\label{lW1}
For each $\varepsilon>0$ and $\delta>0$, there exists a constant
$C=C(\varepsilon,\delta) \in\mathbb{N}$ such that
\[
\sup_{X \in\mathfrak{X}} P \bigl[M_{\varepsilon} (X) > C\bigr]< \delta,
\]
where the random variable $M_\varepsilon(X)$ is pathwise defined as the
maximal amount of moves of the process $X$ of size bigger than
$\varepsilon$, that is,
\begin{eqnarray*}
&& M_\varepsilon(X) (\omega) \\
&& \qquad:= \sup \Bigl\{m \in\mathbb{N} \Big|
\bigl|X_{t_i}(\omega) - X_{t_{i-1}}(\omega)\bigr| > \varepsilon,\mbox{ for }
0\leq t_0 < t_1 < \cdots< t_m \leq1 \Bigr\}.
\end{eqnarray*}
\end{lemma}

\begin{pf}
Choose $n \in\mathbb{N}$ such that $\frac{1}{n} \leq\frac
{\varepsilon
}{2}$, fix some $X\in\mathfrak{X}$ and denote by $X=M-A$ its Mertens
decomposition. Then $M=X+A$ is a nonnegative c\`adl\`ag local
martingale and hence a c\`adl\`ag supermartingale such that
\[
E[M_t]\leq1
\]
for all $t\in[0,1]$. Letting $C_1 \in\mathbb{N}$ with $C_1 \geq
\frac
{2}{\delta}$ we obtain from Doob's maximal inequality that
\[
P \Bigl(M_1^*:=\sup_{0 \leq s \leq1} M_s >
C_1 \Bigr) \leq\frac
{1}{C_1} \leq\frac{\delta}{2}.
\]
Then we divide the interval $[0, C_1]$ into $n C_1=:N$ subintervals
$I_k:= [\frac{k}{N}, \frac{k+1}{N}]$ of equal length of at most
$\frac
{\varepsilon}{2}$ for $k=0, \dots, N-1$. The basic intuition behind
this is that whenever the nonnegative (c\`adl\`ag) local martingale
$M=(M_t)_{0 \leq t \leq1}$ moves more than $\varepsilon$, while its
supremum stays below $C_1$, it has at least to cross one of the
subintervals $I_k$.
For each interval $I_k$ we can estimate the number $U (M; I_k)$ of
up-crossings of the interval $I_k$ by the process $M=(M_t)_{0 \leq t
\leq1}$ up to time $1$ by Doob's up-crossing inequality by
\[
P\bigl[U (M;I_k) > C_2\bigr] \leq\frac{N}{C_2} E
\bigl[U (M;I_k)\bigr] \leq\frac{N}{C_2} \sup_{0 \leq t \leq1}
E[M_t] \leq\frac{N}{C_2}.
\]
Choosing $\tilde{C}_2=\frac{2N^2}{\delta}$ we obtain that
\[
P\bigl[U(M;I_k) >\tilde{C}_2\bigr] \leq
\frac{\delta}{2N}.
\]
Then summing over all intervals gives for the number $U_{\varepsilon
}(M)$ of up-moves of the process $M$ of size $\varepsilon$ that
\begin{eqnarray*}
&& P\bigl[U_\varepsilon(M) >\tilde C_2N\bigr]\\
&&\qquad\leq   P
\bigl[M^*_1 \leq C_1, \exists k\in\{1, \dots, N\}
\mbox{ with }U(M;I_k) > \tilde{C}_2\bigr] + P
\bigl[M^*_1 >C_1\bigr]
  \leq  \delta.
\end{eqnarray*}
Since $X=M-A$ is nonnegative starting at $X_0=1$ and $A$ is
nondecreasing, the number $M_\varepsilon(X)$ of moves of $X$ of size
$\varepsilon$ is smaller than $2(U_{\varepsilon}(X) + N)$. Therefore we
can conclude that
%
\begin{equation}
\label{lW1eq1}
P\bigl[M_{\varepsilon}(X) > C\bigr] \leq\delta
\end{equation}
for $C=2(\tilde{C}_2 +1) N$. To\vspace*{1pt} complete the proof, we observe that the
constants $C_1$ and $C=2(\tilde{C}_2 + 1)N$ are independent of the
choice of the optional strong supermartingale $X\in\mathfrak{X}$, and
we can therefore take the supremum over all $X\in\mathfrak{X}$ in the
inequality (\ref{lW1eq1}).
\end{pf}

Let $X= (X_t)_{0 \leq t\leq1}$ be a l\`ag (existence of left limits)
process and $\tau$ be a $(0,1]$-valued stopping time. For $m\in
\mathbb{N}$,
let $\tau_m$ be the $m$th dyadic\vspace*{1pt} approximation of the stopping time
$\tau$ 
as defined in \eqref{A1.1}. Note that $\tau_m$ is $\{\frac{1}{2^m},
\dots,1\}$-valued, as $\tau>0$. As $(X_t)_{0 \leq t \leq1}$ is assured
to have l\`ag trajectories, we obtain
%
\begin{equation}
\label{P2}
X_{\tau_m-2^{-m}} \mathop{\longrightarrow}^{P\mbox{-}\mathrm{a.s.}}
X_{\tau-},\qquad \mbox{as } m\to\infty,
\end{equation}
and therefore in probability. The next lemma gives a quantitative
version of this rather obvious fact.

\begin{lemma} \label{lW2}
Let $\tau$ be a totally inaccessible $(0,1]$-valued stopping time. Then
the convergence in  \eqref{P2} above holds true in probability  uniformly
over all nonnegative optional strong supermartingales $X \in\mathfrak
{X}$, that is, $X=(X_t)_{0 \leq t \leq1}$, starting at $X_0=1$. More
precisely, we have for each $\varepsilon>0$ that
%
\begin{equation}
\label{P3a}
\lim_{m\to\infty} \sup_{X \in\mathfrak{X}} P\bigl[|
X_{\tau_m-2^{-m}} -X_{\tau-} | >\varepsilon\bigr]=0.
\end{equation}
\end{lemma}

\begin{pf}
Denote by $A=(A_t)_{0 \leq t \leq1}$ the compensator of $\tau$, which
is the unique continuous increasing process such that $(\mathbh{1}_{\llbracket\tau,1 \rrbracket} - A_t)_{0 \leq t \leq1}$ is a martingale.
For every predictable set $G\subseteq\Omega\times[0,1]$, we then have
%
\begin{equation}
\label{186}
\hspace*{13pt}\quad P [\tau\in G ] = E [\mathbh{1}_G
\mathbh{1}_{\llbracket\tau\rrbracket} ] = E \biggl[\int^1_0
\mathbh{1}_G(t)\,d \mathbh{1}_{\llbracket\tau, 1 \rrbracket} (t) \biggr] = E \biggl[
\int^1_0\mathbh{1}_G(t)\,dA_t
\biggr].
\end{equation}
Here we used that the predictable $\sigma$-algebra on $\Omega\times
[0,1]$ is generated by the left-open stochastic intervals, that is,
intervals of the form $\rrbracket\sigma_1, \sigma_2\rrbracket$ for
stopping times $\sigma_1$ and $\sigma_2$ and a monotone class argument
to deduce the second equality in \eqref{186}. The third equality is the
definition of the compensator.
Fix $X \in\mathfrak{X}$, $\varepsilon>0$, $\delta>0$ and apply Lemma~\ref{lW1} and the integrability of $A_1$ to find $c=c(\varepsilon,
\delta, \tau)$ such that the exceptional set
%
\begin{equation}
\label{187}
F_1=\bigl\{M_{\varepsilon} (X) \geq c\bigr\}
\end{equation}
satisfies
%
\begin{equation}
\label{188}
E[\mathbh{1}_{F_1} A_1]<\delta.
\end{equation}
Find $m$ large enough such that
%
\begin{equation}
\label{189} E[\mathbh{1}_{F_2} A_1]<\delta,
\end{equation}
where $F_2$ is the exceptional set
%
\begin{equation}
\label{190}
F_2= \biggl\{\exists k\in\bigl\{1,
\ldots,2^m\bigr\}\mbox{ such that }A_{{k}/{2^m}} - A_{({k-1})/{2^m}} >
\frac{\delta}{c} \biggr\}.
\end{equation}
Define $G$ to be the predictable set
%
\begin{eqnarray}
G&=& \bigcup^{2^m}_{k=1} \biggl
\{(\omega, t)  \bigg| \frac{k-1}{2^m} < t \leq\frac{k}{2^m} \mbox{ and}
\nonumber
\\[-8pt]
\label{191}\\[-8pt]
\nonumber
 &&\hspace*{50pt}{}\sup
_{({k-1})/{2^m}
\leq u
\leq t} \bigl| X_{u-}(\omega) - X_{({k-1})/{2^m}} (\omega)
\bigr| \leq \varepsilon \biggr\}.
\end{eqnarray}
We then have $P [\tau\notin G] < 3 \delta$. Indeed, applying \eqref
{186} to the complement $G^c$ of $G$ we get
\[
P [\tau\notin G] = E \biggl[ (\mathbh{1}_{F_1 \cup F_2} + \mathbh{1}_{\Omega\setminus(F_1 \cup F_2)}
) \int^1_0 \mathbh{1}_{G^c
}\,dA_t
\biggr],
\]
where $F_1$ and $F_2$ denote the exceptional sets in \eqref{187} and
\eqref{190}. By \eqref{188} and \eqref{189},
%
\begin{equation}
E \biggl[\mathbh{1}_{F_1 \cup F_2} \int^1_0
\mathbh{1}_{G^c} \,dA_t \biggr] \leq2\delta.
\end{equation}
On the set $\Omega\setminus(F_1 \cup F_2)$ we deduce from \eqref{187},  \eqref{190} and \eqref{191} that
\[
\int^1_0 \mathbh{1}_{G^c}\,dA_t
\leq c \frac{\delta}{c} = \delta
\]
so that
%
\begin{equation}
\label{192} P [\tau\notin G] \leq3 \delta.
\end{equation}
For $(\omega,t) \in G$ such that $\frac{k-1}{2^m} < t \leq\frac
{k}{2^m}$, we have
\[
\bigl|X_{t-}(\omega) - X_{({k-1})/{2^m}} (\omega) \bigr| \leq\varepsilon
\]
so that by \eqref{192} we get
\[
P \bigl[| X_{\tau-} - X_{\tau_m-2^{-m}}| > \varepsilon \bigr] < 3 \delta,
\]
which shows \eqref{P3a}.
\end{pf}
\begin{pf*}{Proof of Proposition~\ref{propti}}
Fix $\varepsilon> 0$, and apply Lemma~\ref{lW2} to find $m\in
\mathbb
{N}$ such that
%
\begin{equation}
\label{193}
P \bigl[| \widetilde{X}_{\tau_m-2^{-m}} - \widetilde{X}_{\tau-}
| > \varepsilon \bigr] < \varepsilon,
\end{equation}
for each $\widetilde{X} \in\mathfrak{X}$. As $(X^n_q)^\infty_{n=1}$
converges to $X_q$ in probability, for every rational number $q \in
\mathbb{Q} \cap[0,1]$ we have
\[
P \biggl[\max_{0 \leq k \leq2^m} \bigl| X^n_{{k}/{2^m}} -
X_{{k}/{2^m}} \bigr| > \varepsilon \biggr] < \varepsilon,
\]
for all $n\geq N(\varepsilon)$. We then may apply \eqref{193} to $X^n$
and $X$ to conclude that
\[
P\Bigl[\bigl| X^n_{\tau-} - X_{\tau-} \bigr| > 3 \varepsilon
\Bigr] < 3\varepsilon.
\]
\upqed\end{pf*}
With Proposition~\ref{propti} we have now everything in place to prove
Theorem~\ref{t3}.
\begin{pf*}{Proof of Theorem~\ref{t3}}
The existence of the optional strong supermartingale $X^{(1)}$ is the
assertion of Theorem~\ref{t1}. To obtain the predictable strong
supermartingale $X^{(0)}$, we observe that, since $\widetilde{X}^n$
and $X^{(1)}$
are l\`adl\`ag, the optional set
\[
F:=\bigcup^\infty_{n=1} \bigl\{\widetilde{X}^n \neq
\widetilde{X}^n_-\bigr\} \cup\bigl\{X^{(1)} \neq
X^{(1)}_-\bigr\}
\]
has at most countably many sections, and therefore there exists by
Theorem~117 in Appendix IV of \cite{DM78} a countable number of $[0,1]
\cup\{\infty\}$-valued stopping times $(\sigma_m)^{\infty}_{m=1}$ with
disjoint graphs such that $F=\bigcup_{m=1}^\infty\llbracket\sigma
_m\rrbracket$. By Theorem IV.81.c) in \cite{DM78} we can decompose each
stopping time $\sigma_m$ into an accessible stopping time $\sigma^A_m$
and a totally inaccessible stopping time $\sigma^I_m$ such that
$\sigma
_m=\sigma^A_m \wedge\sigma^I_m$. Again combining Koml\'os's lemma with
a diagonalization procedure we obtain a sequence of convex combinations
$\widetilde{X}^n\in\operatorname{conv}(X^n, X^{n+1}, \ldots)$ such
that $\widetilde{X}^n_\tau\stackrel
{P}{\longrightarrow} X^{(1)}_\tau$ for all $[0,1]$-valued stopping
times $\tau$ as well as
\[
\widetilde{X}^n_{\tau_m-} \mathop{-\!\!\!\!-\!\!\!\longrightarrow}^{P\mbox{-}\mathrm{a.s.}} Y^{(0)}_m,
\qquad\mbox{as }n\to\infty,
\]
for all stopping times $\tau_m:=\sigma^A_m\wedge1$ and suitable
nonnegative random variables $Y^{(0)}_m$ for $m\in\mathbb{N}$. Now we
can define $X^{(0)}$ by
\[
X^{(0)}_t(\omega) =
\cases{
Y^{(0)}_m(\omega), &\quad $t = \sigma_m^A(
\omega)$ and $m\in \mathbb{N}$,\vspace*{3pt}
\cr
X^{(1)}_{t-}(
\omega)=X^{(1)}_t(\omega), & \quad $\mbox{else}$.}
\]
For all $[0,1]$-valued stopping times $\tau$, we then have convergence
\eqref{eqt32}, that is,
\begin{eqnarray*}
\widetilde{X}^n_{\tau-} (\omega) &=& \widetilde{X}^n_{\tau}
(\omega ) \mathbh{1}_F \bigl(\omega, \tau (\omega) \bigr) + \sum
^{\infty}_{m=1} \widetilde{X}^n_{\tau_m^-}
\mathbh{1}_{\{\sigma
^A_m=\tau\}} + \sum^{\infty}_{m=1}
\widetilde{X}^n_{\sigma^I_m-} \mathbh{1}_{\{
\sigma^I_m=\tau\}}
\\
&\stackrel{P}\longrightarrow & X_{\tau}^{(0)}(\omega)
\mathbh{1} _F \bigl(\omega, \tau, (\omega) \bigr) + \sum
^\infty_{m=1} Y^{(0)}_m
\mathbh{1}_{\{
\sigma
^A_m=\tau\}} + \sum^\infty_{m=1}
X_{\sigma^I_m-}^{(1)} \mathbh {1}_{\{
\sigma_m^I=\tau\}},
\end{eqnarray*}
since $\widetilde{X}^n=\widetilde{X}^n_-$ for all $n\in\mathbb{N}$
on $F$ and $\widetilde{X}^n_{\sigma
-} \mathbh{1}_{\{\sigma=\tau\}} \stackrel{P}{\longrightarrow} X_{\sigma-}
\mathbh{1}_{\{\sigma=\tau\}}$ for all $[0,1]$-valued totally inaccessible
stopping times $\tau$ by Proposition~\ref{propti}. As all stopping
times $\sigma_m^A$ are accessible and each $Y_m^{(0)}$ is $\mathcal
{F}_{\tau_{m}-}$-measurable, we have that $X^{(0)}$ is an accessible
process such that $X^{(0)}_\tau\mathbh{1}_{\{\tau< \infty\}}$ is
$\mathcal{F}_{\tau-}$-measurable for every stopping time $\tau$.
Therefore $X^{(0)}$ is by Theorem~3.20 in~\cite{D72} even predictable.
By Remark~5.(c) in Appendix~I of \cite{DM82} the left limit\vspace*{1pt} process
$\widetilde{X}
^n_-$ of each optional strong supermartingale $\widetilde{X}^n$ is a
predictable
strong supermartingale satisfying
\[
\widetilde{X}^n_{\tau-} \geq E\bigl[\widetilde{X}^n_\tau|
\mathcal {F}_{\tau-}\bigr]
\]
for all $[0,1]$-valued predictable stopping times. Therefore\vspace*{1pt} the
predictable strong supermartingale property [part (3) of Definition~\ref
{defpred}] and $X^{(0)}_{\tau} \geq E[X^{(1)}_{\tau} | \mathcal
{F}_{\tau-}]$
follow immediately from \eqref{eqt31} and \eqref{eqt32} by Fatou's
lemma. To see $X^{(1)}_{\tau-}\geq X^{(0)}_{\tau}$, let $(\tau
_m)_{m=1}^\infty$ be a foretelling sequence of stopping times for the
predictable stopping time $\tau$. Then we have
\[
\widetilde{X}^{n}_{\tau_m} \geq E\bigl[\widetilde{X}^{n}_{\tau_{m+k}}
| \mathcal{F}_{\tau_m}\bigr]
\]
for all $n,m,k\in\mathbb{N}$. Applying Fatou's lemma we then obtain
\[
\widetilde{X}^{n}_{\tau_m} \geq E\bigl[\widetilde{X}^{n}_{\tau-}
| \mathcal{F}_{\tau_m}\bigr]
\]
by sending $k\to\infty$,
\[
X^{(1)}_{\tau_m} \geq E\bigl[X^{(0)}_{\tau-} |
\mathcal{F}_{\tau_m}\bigr]
\]
by sending also $n\to\infty$ and finally
$X^{(1)}_{\tau-}\geq X^{(0)}_{\tau}$ by sending $m\to\infty$.
\end{pf*}
%
\section{Proof of Proposition \texorpdfstring{\protect\ref{pSI}}{2.12}}\label{sec7}
One application of Theorem~\ref{t3} is a convergence result for
stochastic integrals of predictable integrands of finite variation with
respect to nonnegative optional strong supermartingales.

Fix a nonnegative optional strong supermartingale $X \in\mathfrak
{X}$, and let $\varphi= (\varphi_t)_{0 \leq t \leq1}$ be a
predictable process of finite variation, so that it has l\`adl\`ag
paths. We then define\vspace*{-2pt}
%
\begin{eqnarray}
\int^t_0 X_u(
\omega) \,d\varphi_u(\omega) &:=&  \int^t_0
X_u(\omega) \,d\varphi^c_u(\omega) + \sum
_{0 <u \leq t} X_{u-}(\omega) \Delta
\varphi_u(\omega)
\nonumber
\\[-10pt]
\label{defSI1}\\[-10pt]
\nonumber
&&{}+ \sum_{0 \leq u
< t}
X_u(\omega) \Delta_+\varphi_u(\omega)
\end{eqnarray}
for all $t\in[0,1]$, which is $P$-a.s. pathwise well defined, as $X$ is
l\'adl\'ag and $\varphi$ of finite variation. Here the integral $\int^t_0 X_u (\omega) \,d\varphi^c_u(\omega)$ with respect to the
continuous part
$\varphi^c$ [see \eqref{defcont}] can be defined as a pathwise
Riemann--Stieltjes integral or a pathwise Lebesgue--Stieltjes integral,
as both integrals coincide.

To ensure the integration  by parts formula
%
\begin{equation}\label{SIPI}
\hspace*{18pt}\varphi_t(\omega)X_t(\omega) - \varphi_0(
\omega) X_0 (\omega) = \int^t_0
\varphi _u(\omega) \,dX_u(\omega) + \int
^t_0 X_u (\omega)\,d
\varphi_u(\omega ),
\end{equation}
we define the stochastic integral $\varphi\bolds{\cdot}X_t:=\int^t_0 \varphi
_u \,dX_u$ by
%
\begin{eqnarray}
\qquad\int^t_0 \varphi_u
(\omega) \,dX_u (\omega)& :=& \int^t_0
\varphi ^c_u (\omega) \,dX_u (\omega)+ \sum
_{0 <u \leq t} \Delta\varphi_u(\omega)
\bigl(X_t(\omega) - X_{u-}(\omega) \bigr)
\nonumber
\\[-9pt]
\label{defSI2}\\[-9pt]
\nonumber
&&{}+ \sum_{0 \leq u < t} \Delta_+\varphi_u(
\omega) \bigl(X_t(\omega ) - X_{u}(\omega ) \bigr)
\end{eqnarray}
for $t \in[0,1]$ that is again pathwise well defined. The integral
$\int^t_0 \varphi^c_u (\omega) \,dX_u (\omega)$ can again be defined
as a
pathwise Riemann--Stieltjes integral or a pathwise Lebesgue--Stieltjes
integral. If $X=(X_t)_{0 \leq t \leq1}$ is a semimartingale, the
definition of $(\int^t_0 \varphi_u \,dX_u)_{0\leq t\leq1}$ via \eqref{defSI2} coincides with the classical stochastic integral.

We first derive an auxiliary\vspace*{-2pt} result.

\begin{lemma}\label{lSI}
Let $(X^n)^\infty_{n=1}$, $X^{(0)}$ and $X^{(1)}$ be l\`adl\`ag
stochastic processes such\vspace*{-4pt} that:
\begin{longlist}[(ii)]
\item[(i)] $X^n_\tau\stackrel{P}{\longrightarrow}
X^{(1)}_\tau$
and $X^n_{\tau-} \stackrel{P}{\longrightarrow} X^{(0)}_\tau$ for all
$[0,1]$-valued stopping times $\tau$;
\item[(ii)] for all $\varepsilon> 0$ and $\delta> 0$, there are
constants $C_1(\delta) > 0$ and $C_2(\varepsilon, \delta) > 0$ such that
%
\begin{eqnarray}
\label{lSIa}\sup_{X\in\mathcal{X}^0} P\Bigl[\sup_{0 \leq s \leq1} |
X_s | > C_1 (\delta )\Bigr] &\leq & \delta,
\\[-2pt]
\label{lSIb}
\sup_{X\in\mathcal{X}^1} P\bigl[M_\varepsilon(X) > C_2 (
\varepsilon, \delta )\bigr] &\leq & \delta,
\end{eqnarray}
where $\mathcal{X}^0=\{X^{(0)}, X^{(1)}, X^n, X^n_- \mbox{ for }n \in
\mathbb{N}\}$, $\mathcal{X}^1=\{X^{(1)}, X^n\mbox{ for }n \in
\mathbb{N}\}$ and
\[
M_\varepsilon(X):=\sup \Bigl\{m \in\mathbb{N}  \Big| \bigl|X_{t_i} (
\omega) - X_{t_{i-1}} (\omega) \bigr| > \varepsilon\mbox{ for }0 \leq
t_0 < t_1<\cdots  < t_m \leq 1 \Bigr\}
\]
for $X \in\mathcal{X}^1$.
\end{longlist}
Then we have, for all predictable processes $\varphi=(\varphi_t)_{0
\leq t \leq1}$ of finite variation, that:
\renewcommand{\theequation}{\arabic{equation}}
\setcounter{equation}{0}
\begin{eqnarray}
\label{1}
\int^\tau_0 X^n_u \,d\varphi_u  & \stackrel
{P}{\longrightarrow} &  \int^\tau_0 X^{(1)}_u \,d\varphi^c_u + \sum_{0
< u
\leq\tau} X_u^{(0)} \Delta\varphi_u + \sum_{0 \leq u < \tau}
X_u^{(1)} \Delta_+\varphi_u;
\\
\int^\tau_0 \varphi_u \,dX^n_u  & \stackrel
{P}{\longrightarrow} & \int^\tau_0 \varphi^c_u \,dX^{(1)}_u + \sum_{0
< u
\leq\tau} \Delta\varphi_u \bigl(X^{(1)}_\tau- X^{(0)}_u\bigr)
\nonumber
\\[-8pt]
\label{2}
\\[-8pt]
\nonumber
&&{}+ \sum_{0
\leq u
< \tau} \Delta_+  \varphi_u \bigl(X^{(1)}_\tau- X^{(1)}_u\bigr)
\end{eqnarray}
for all $[0,1]$-valued stopping times $\tau$. Convergence (1) is even
uniformly in probability.
\end{lemma}

\renewcommand{\theequation}{\thesection.\arabic{equation}}
\setcounter{equation}{5}
\begin{pf}
\eqref{1} We first show that
%
\begin{equation}\label{eqcl1}
\sup_{0\leq t\leq1}\biggl\llvert \sum_{0 < u \leq t}
X^n_{u-} \Delta \varphi_u - \sum
_{0 < u \leq t} X^{(0)}_{u-} \Delta
\varphi_u\biggr\rrvert \stackrel {P} {\longrightarrow}0, \qquad \mbox{as }n
\to\infty,
\end{equation}
that is, uniformly in probability. The proof of the convergence
\[
\sup_{0\leq t\leq1}\biggl\llvert \sum_{0 < u \leq t}
X^n_u \Delta_+ \varphi_u - \sum
_{0 < u \leq t} X^{(1)}_{u-} \Delta_+
\varphi_u\biggr\rrvert \stackrel {P} {\longrightarrow}0, \qquad \mbox{as }n
\to\infty,
\]
is completely analog and therefore omitted.

Since $\varphi$ is predictable and of finite variation and hence l\`
adl\`ag, there exists a sequence $(\tau_m)_{m=1}^\infty$ of
$[0,1]\cup\{
\infty\}$-valued stopping times exhausting the jumps of~$\varphi$.
Using the stopping times $(\tau_m)_{m=1}^\infty$ we can write
\[
\sum_{0 < u \leq t} X_u \Delta
\varphi_u = \sum^{\infty}_{m=1}
X_{\tau
_m} \Delta\varphi_{\tau_m} \mathbh{1}_{\{\tau_m \leq t\}}
\]
for all $X\in\mathcal{X}^0$ and estimate
\begin{eqnarray}
&&\hspace*{8pt} \sup_{0\leq t\leq1}\Biggl\llvert \sum
^\infty_{m=1} X^n_{\tau_m-} \Delta
\varphi_{\tau_m} \mathbh{1}_{\{\tau_m \leq t\}} - \sum
^\infty_{m=1} X^{(0)}_{\tau_m} \Delta
\varphi_{\tau_m} \mathbh{1}_{\{\tau_m
\leq t\}
} \Biggr\rrvert
\nonumber
\\[-8pt]
\label{eqSI1}\\[-8pt]
\nonumber
&& \hspace*{8pt}\qquad \leq\sum^N_{m=1} \bigl| X^n_{\tau_m-}
- X^{(0)}_{\tau_m} \bigr| \bigl| \Delta \varphi _{\tau_m} \bigr| + \sup
_{m \in\mathbb{N}} \bigl| X^n_{\tau_m-} - X^{(0)}_{\tau
_m}
\bigr| \sum^\infty_{m=N+1} | \Delta
\varphi_{\tau_m} |.
\nonumber
\end{eqnarray}
Combining \eqref{eqSI1} with the fact that $\varphi$ is of finite
variation we obtain \eqref{eqcl1}, as
\[
\sup_{m \in\mathbb{N}} \bigl| X^n_{\tau_m-} -
X^{(0)}_{\tau_m} \bigr| \sum^\infty
_{m=N+1} | \Delta\varphi_{\tau_m} |\stackrel{P} {\longrightarrow
}0,\qquad \mbox{as }N\to\infty,
\]
by \eqref{lSIa} and $\sum^N_{m=1} | X^n_{\tau_m-} - X^{(0)}_{\tau_m}
| | \Delta\varphi_{\tau_m} |\stackrel{P}{\longrightarrow}0$, as
$n\to
\infty$, for each $N$ by assumption (i).

The key observation for the proof of the convergence
%
\begin{equation}
\label{eqcl3}
\sup_{0\leq t\leq1}\biggl\llvert \int
^t_0 X^n_u \,d
\varphi^c_u - \int^t_0
X^{(1)}_u \,d \varphi^c_u\biggr
\rrvert \stackrel{P} {\longrightarrow}0, \qquad \mbox{as }n\to\infty,
\end{equation}
is that we can use assumption (ii) to approximate the stochastic
Riemann--Stieltjes integrals by Riemann sums in probability uniformly
for all $X \in\mathcal{X}^1$, as either the integrator or the
integrand moves very little. Indeed, for $\varepsilon>0$ and
$c_1,c_2>0$, we
have that
\begin{eqnarray*}
&& \sup_{0\leq t\leq1}
\Biggl\llvert \int^t_0
X_u\,d\varphi^c_u - \sum
^N_{m=1} X_{\sigma_{m-1}} \bigl(
\varphi^c_{\sigma_m\wedge t} - \varphi ^c_{\sigma
_{m-1}\wedge t}
\bigr) \Biggr\rrvert
\\[2pt]
&& \qquad \leq\sum^N_{m=1} \sup
_{u \in[\sigma_{m-1}, \sigma_m]} | X_u - X_{\sigma_{m-1}} | \bigl(\bigl|
\varphi^c \bigr| _{\sigma_m} - \bigl| \varphi ^c\bigr|_{\sigma
_{m-1}}
\bigr)
\\[2pt]
&&\qquad
\leq c_2 2c_1 \frac{\varepsilon}{4c_1c_2} +
\frac
{\varepsilon}{2c_1} c_1 = \varepsilon
\end{eqnarray*}
on $\{|\varphi|_1\leq c_1\} \cap\{X^*_1 \leq c_1\} \cap\{M_{{\varepsilon}/({2c_1})} (X) \leq c_2\}$, where the stopping times
$(\sigma
_m)^\infty_{m=0}$ are given by $\sigma_0=0$ and
\[
\sigma_m:= \inf \biggl\{t > \sigma_{m-1} \bigg| \bigl|
\varphi^c\bigr|_t - \bigl|\varphi ^c\bigr|_{\sigma_{m-1}} >
\frac{\varepsilon}{4c_1 c_2} \biggr\} \wedge1
\]
and $N=\frac{4c_1 c_2}{\varepsilon}$. Choosing $c_1,c_2>0$ and hence
$N$ sufficiently large we therefore obtain\vspace*{-3pt}
\[
\sup_{X \in\mathcal{X}^1}P \Biggl(\sup_{0\leq t\leq1}\Biggl\llvert
\int^t_0 X_n\,d\varphi^c_u
- \sum^N_{m=1} X_{\sigma_{m-1}} \bigl(
\varphi ^c_{\sigma
_m\wedge t} - \varphi^c_{\sigma_{m-1}\wedge t}
\bigr) \Biggr\rrvert >\varepsilon \Biggr)<\delta
\]
for any $\delta>0$ by assumption (ii). Combing this with the estimate
\begin{eqnarray*}
&& \sup_{0\leq t\leq1}\biggl\llvert \int^t_0
X^n_u \,d \varphi^c_u - \int
^t_0 X^{(1)}_u \,d
\varphi^c_u \biggr\rrvert \\[2pt]
&&\qquad\leq  \sup_{0\leq t\leq1}
\Biggl\llvert \int^t_0 X^n_u
\,d \varphi^c_u - \sum^N_{m=1}
X^n_{\sigma_{m-1}} \bigl(\varphi ^c_{\sigma_m\wedge t} -
\varphi^c_{\sigma_{m-1} \wedge t} \bigr)\Biggr\rrvert
\\[2pt]
&&\qquad\quad{}+\sum^N_{m=1} \bigl| X^n_{\sigma_{m-1}}
- X^{(1)}_{\sigma_{m-1}} \bigr| \bigl(\bigl|\varphi^c
\bigr|_{\sigma_m} - \bigl|\varphi^c \bigr|_{\sigma_{m-1}} \bigr)
\\[2pt]
&&\qquad\quad{}+\sup_{0\leq t\leq1}\Biggl\llvert \int^t_0
X^{(1)}_u \,d \varphi^c_u - \sum
^N_{m=1} X^{(1)}_{\sigma_{m-1}}
\bigl(\varphi^c_{\sigma_m\wedge t} - \varphi ^c_{\sigma_{m-1}\wedge t}
\bigr)\Biggr\rrvert
\end{eqnarray*}
then implies \eqref{eqcl3}, as
\[
\max_{m=0, \ldots, N-1} \bigl| X^n_{\sigma_m} -
X^{(1)}_{\sigma
_m}\bigr|\stackrel {P} {\longrightarrow}0,\qquad \mbox{as }n\to
\infty,
\]
for each fixed $N$ by assumption (i).

\eqref{2} As $X^n_\tau\varphi_\tau\stackrel{P}{\longrightarrow}
X^{(1)}_\tau\varphi_\tau$ for all $[0,1]$-valued stopping times, this
assertion follows immediately from  \eqref{1} and the integration by
parts formula \eqref{SIPI}.
\end{pf}
Combining the previous lemma with Lemma~\ref{lW1} allows us now to
complete the proof of Proposition~\ref{pSI}.
\begin{pf*}{Proof of  Proposition~\ref{pSI}}
Part (1) is Theorem~\ref{t3}, and part (2) follows from Lemma~\ref
{lSI} as soon as we have shown that its assumptions are satisfied.
Assumption (i) is \eqref{1} and for the set $\mathcal{X}^1$ assumption
(ii) can be derived from Lemma~\ref{lW1}. Therefore it only remains to
show \eqref{lSIa} for $X^{(0)}$ and $X^n_-$ for $n \in\mathbb{N}$.
For the left limits \eqref{lSIa} follows from the validity of the
latter for the processes $X^n$ for $n \in\mathbb{N}$ and for the
predictable strong supermartingale $X^{(0)}$ from (3.1) in Appendix~I
of \cite{DM82}.
\end{pf*}

\section*{Acknowledgments}

We would like to thank an anonymous referee for
careful reading of the paper and pertinent remarks.






\printaddresses
\end{document}